\newtheorem{theorem}{Theorem}[section]
\newtheorem{lemma}[theorem]{Lemma}
\newtheorem{proposition}[theorem]{Proposition}
\newtheorem{corollary}[theorem]{Corollary}
\theoremstyle{definition}
\theoremstyle{remark}
\newtheorem{remark}[theorem]{Remark}
\def\bh{\mathcal{B}(H)}
\def\cH{H}
\def\wn{w_N}
\def\R{\mathbb{R}}
\def\T{\mathbb{T}}
\numberwithin{equation}{section}
\begin{document}

\setcounter{page}{1}

\title[Generalized numerical radius and related inequalities]{Generalized numerical radius and related inequalities}

\author[T. Bottazzi and C. Conde]{T. Bottazzi$^{1,2}$ and C. Conde$^{3,4}$}

\address{$^1$ Universidad Nacional de R\'io Negro. LaPAC, Sede Andina (8400) S.C. de Bariloche, Argentina.}

\address{$^2$ Consejo Nacional de Investigaciones Cient\'ificas y T\'ecnicas, (1425) Buenos Aires,
	Argentina.}

\address{$^3$Instituto Argentino de Matem\'atica ``Alberto P. Calder\'on", Saavedra 15 3Ú piso, (C1083ACA) Buenos Aires, Argentina}

\address{$^4$Instituto de Ciencias, Universidad Nacional de Gral. Sarmiento, J. M. Gutierrez 1150, (B1613GSX) Los Polvorines, Argentina}

\email{tbottazzi@unrn.edu.ar}
\email{cconde@ungs.edu.ar}

\subjclass[2010]{Primary: 47A12, 47A30, 47A63; Secondary: 47B10, 47B15, 51F20, 47B44.}

\keywords{Numerical radius, Schatten $p-$norm, Orthogonality, Norm-parallelism, Inequality}
 \begin{abstract}
In \cite{Abu-kittaneh}, Abu Omar and Kittaneh defined a new generalization of the numerical radius. That is, given a norm $N(\cdot)$ on $\bh$, the space of bounded linear operators over a Hilbert space $H$, and $A\in\bh$
$$\wn(A)=\sup\limits_{\theta\in \R}N(Re(e^{i\theta}A)).$$ 
They proved several properties and introduced some inequalities. We continue  with the study of this generalized numerical radius and we develop diverse inequalities involving $w_N$. We also study particular cases when $N(\cdot)$ is  the $p$- Schatten norm with $p>1$.

\end{abstract}
\maketitle

\section{Introduction}

Throughout the paper, $(H, \langle \cdot, \cdot\rangle)$ denotes a separable complex Hilbert space.
Let $\mathcal{B}(H)$ and $\mathcal{K}(H)$ denote the Banach spaces of all bounded operators and all compact operators equipped with the usual operator norm $\|\cdot\|$, respectively. In the case when $\dim(\cH)=n$, we identify $\bh$ with the full matrix algebra  $\mathbb{M}_n$ of all $n\times n$ matrices with entries in the complex field. 
The symbol $I$ stands for the identity operator on $\cH$. 

For $A \in \mathcal{B}(H)$, $A^*$ denotes the adjoint of $A$ and  if $A=A^*$ then $A$ is a selfadjoint operator. We can write
$A = Re(A) + i Im(A),$
in which $Re(A) =\frac{A+A^*}{2}$ and  $Im(A)=\frac{A-A^*}{2i}$ are selfadjoint operators. This is the so called Cartesian decomposition of $A$.

For each operator $A\in \bh$, we consider the numerical range
$
W(A)= \{\langle Ah, h \rangle: \|h\|=1 \}$ and 
$$
w(A)=\sup\{ |\lambda|: \lambda \in W(A)\}, 
$$
 called as numerical radius of $A$.
It is well known that $W(A)$ is a bounded convex subset of the complex plane, which contains the spectrum of $A$ in its closure. The numerical radius $w(\cdot)$ defines a norm on $\bh$ which is equivalent to $\|\cdot\|$. In fact, the following inequalities 
\begin{equation}
\frac 12 \|A\|\leq w(A)\leq \|A\|,
\end{equation}
hold. The first inequality becomes an equality if $A^2=0$ and the second inequality becomes an equality if $A$ is normal. On the  other hand, 
Yamazaki proved in   \cite{yamazaki}  that 
$$
w(A)=\sup_{\theta\in \R}\|Re(e^{i\theta}A)\|.
$$
Recently, motivated by the previous relation, the authors in  \cite{Abu-kittaneh} gave a generalization of the numerical radius, in this way: for any  $A\in \bh$ 
\begin{equation}\label{wndef}
\wn(A)=\sup\limits_{\theta\in \R}N(Re(e^{i\theta}A)),
\end{equation}
where  $N(\cdot)$ is a  norm on $\bh$. We say that $N(\cdot)$ is selfadjoint if $N(A)=N(A^*)$ for any $A\in \bh$ and submultiplicative if $N(AB)\leq N(A)N(B)$ for every $A, B\in \bh.$

We briefly describe the contents of this paper. Section 2 and 3 contain basic definitions, notation and some preliminary results about Schatten ideals, orthogonality and norm-parallelism of bounded linear operators. 
In section 4 we obtain new upper and lower rounds for   $\wn$. Section 5 continues  in a similar way considering $\wn=w_p$ for $p$-Schatten norm with $1\leq p<\infty$ and we focus on characterize  the attainment of the upper bound for  $w_p$.  Finally, in the last section,  we obtain several inequalities involving the norms $N(\cdot)$ and $\wn$ for the product of two operators in $\bh$.

\section{Some facts on $p$-Schatten class}
For an  operator $A \in \bh$, we denote the modulus  by $|A| = (A^*A)^{\frac {1}{2}}$. 
For any compact operator $A\in \mathcal{K}(H)$, let $s_1(A), s_2(A),\cdots $ be the singular values of $A$, i.e.
the eigenvalues of the $|A|$  in decreasing order and repeated
according to multiplicity. For  $p>0$, let
\begin{equation} \label{defipllel}
{\|A\|}_p = \left(\sum_{i = 1}^\infty s_i(A)^p\right)^{\frac{1}{p}} = \left({\rm tr}|A|^p\right)^{\frac{1}{p}},
\end{equation}
where $\rm tr(\cdot)$ is the trace functional, i.e.
\begin{equation}\label{traza}
{\rm tr}(A)=\sum_{j=1}^{\infty} \langle Ae_j,e_j\rangle,
\end{equation}
with  $\{e_j\}_{j=1}^{\infty}$ is an orthonormal basis of $H$. 
Note that this coincides with the usual definition of the trace if $\cH$ is finite-dimensional. We observe that the series \eqref{traza} converges
absolutely and it is independent from the choice of basis.
Equality \eqref{defipllel}
defines a norm (quasi-norm) on the ideal $\mathcal{B}_p(H) = \{A\in \mathcal{K}(H): {\|A\|}_p<\infty\}$ for $1\leq p<\infty$ ($0< p <1$), called the
\textit{$p$-Schatten class}.

The following theorem collects some of the most important properties of $p$-Schatten operators:
\begin{theorem}\label{bp prop}
\begin{enumerate}
\setlength\itemsep{.8em}
\item $\mathcal{B}_p(H) \subseteq \mathcal{K}(H).$
\item $B_{00}(\cH)$, the space of operators of finite rank, is a dense subspace of $\mathcal{B}_p(H).$
\item $\mathcal{B}_p(H)$ is an operator ideal in $\bh$ for $1\leq p <\infty.$
\item  If $p_1 < p_2$ and  $A \in \mathcal{B}_{p_1}(H)$, then $A \in \mathcal{B}_{p_2}(H)$
and $\|A\|_{p_2} \leq \|A\|_{p_1}$.
\item For any $A \in \mathcal{B}_p(H)$ and $T \in \bh$ we have the following inequalities: $$\|A\|\leq \|A\|_p\ ,\ \|A\|_p=\|A^*\|_p\ \text{and}\ \|TA\|_p\leq \|T\| \|A\|_p.$$
\item  For $p>0$,  $A \in  \mathcal{B}_p(H)$  if and only if  $A^*A \in \mathcal{B}_{p/2}(H)$ and in this case $\|A\|_p^2=\|A^*A\|_{p/2}$.
\end{enumerate}
\end{theorem}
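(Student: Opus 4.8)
The plan is to reduce every assertion to an elementary statement about the singular value sequence $(s_i(A))_i$, exploiting two structural facts about compact operators: the Schmidt (singular value) decomposition $A = \sum_i s_i(A)\langle \cdot, f_i\rangle e_i$ for suitable orthonormal systems $\{e_i\}$, $\{f_i\}$, and the approximation-number characterization $s_n(A) = \min\{\|A-F\|: \operatorname{rank}(F)\le n-1\}$. Item (1) is immediate, since $\mathcal{B}_p(H)$ is defined as a subset of $\mathcal{K}(H)$; and the first inequality of (5) is just $\|A\|=s_1(A)=\left(s_1(A)^p\right)^{1/p}\le\left(\sum_i s_i(A)^p\right)^{1/p}=\|A\|_p$, a single term being dominated by the whole sum.

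Next I would dispatch (2), (4), (6) and the symmetry in (5) by manipulating the singular values directly. For (2) I truncate the Schmidt expansion: setting $A_n=\sum_{i=1}^n s_i(A)\langle\cdot,f_i\rangle e_i\in B_{00}(\cH)$, one has $\|A-A_n\|_p^p=\sum_{i>n}s_i(A)^p$, a tail of a convergent series, hence it tends to $0$. For (4) the crux is scalar monotonicity of $\ell^p$-norms: after scaling so that $\|A\|_{p_1}=1$, every $s_i(A)\le 1$, so $s_i(A)^{p_2}\le s_i(A)^{p_1}$ because $p_2>p_1$, and summing gives $\|A\|_{p_2}^{p_2}\le\|A\|_{p_1}^{p_2}=1$ (finiteness of the sum also yields $A\in\mathcal{B}_{p_2}(H)$). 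For (6) I use $s_i(A^*A)=s_i(A)^2$, valid because $A^*A=|A|^2$ is positive with eigenvalues $s_i(A)^2$, so that $\|A^*A\|_{p/2}=\left(\sum_i(s_i(A)^2)^{p/2}\right)^{2/p}=\left(\sum_i s_i(A)^p\right)^{2/p}=\|A\|_p^2$, and the equivalence of membership follows from the same identity. The middle equality of (5), $\|A\|_p=\|A^*\|_p$, holds since $A$ and $A^*$ have identical nonzero singular values, $A^*A$ and $AA^*$ sharing their nonzero spectrum.

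The genuine content sits in (3) together with the third inequality of (5). That $\mathcal{B}_p(H)$ is a linear subspace requires the triangle inequality for $\|\cdot\|_p$, which I would take from the fact, already recorded before the statement, that $\|\cdot\|_p$ is a norm for $1\le p<\infty$; closure under scalar multiples is clear from homogeneity of the $s_i$. The two-sided ideal property and the submultiplicative bound $\|TA\|_p\le\|T\|\,\|A\|_p$ both follow once I establish the product estimate $s_n(TAS)\le\|T\|\,\|S\|\,s_n(A)$ for $T,S\in\bh$. Here the approximation-number characterization does the work: given $F$ of rank $\le n-1$, the operator $TFS$ again has rank $\le n-1$ and $\|TAS-TFS\|\le\|T\|\,\|S\|\,\|A-F\|$, so passing to the infimum over such $F$ yields the claim. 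Summing the $p$-th powers then gives $\|TAS\|_p\le\|T\|\,\|S\|\,\|A\|_p<\infty$, which is precisely the ideal property, and the case $S=I$ recovers the inequality of (5).

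I expect this product estimate to be the main obstacle, as it is the only step that needs the variational description of singular values rather than bare manipulation of the sequence $(s_i(A))_i$. The delicate point is to justify the identity $s_n(A)=\min\{\|A-F\|:\operatorname{rank}(F)\le n-1\}$ in the infinite-dimensional compact setting; once that is in hand, every remaining argument is formal.
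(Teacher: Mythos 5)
Your proposal is sound, but note that the paper does not prove this theorem at all: it is stated as a collection of standard facts about the Schatten classes (the kind of material one cites from \cite{gohberg-krein} or \cite{mccarthy}), so there is no argument in the paper to compare against. What you have written is the standard self-contained derivation, and it is essentially correct: items (1), (4), (5) (first two parts) and (6) are indeed pure manipulations of the sequence $(s_i(A))_i$; the truncation argument for (2) is right, since the tail $\sum_{i>n}s_i(A)^p$ of a convergent series vanishes; and your reliance on the previously recorded fact that $\|\cdot\|_p$ satisfies the triangle inequality for $1\le p<\infty$ is legitimate within the paper's ordering of the material (that assertion precedes the theorem). Two points deserve finishing. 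First, the step you correctly flag as the crux, $s_n(A)=\min\{\|A-F\|:\operatorname{rank}(F)\le n-1\}$, closes with the same Schmidt decomposition $A=\sum_i s_i(A)\langle\cdot,f_i\rangle e_i$ you already use: the truncation $A_{n-1}$ shows the minimum is at most $s_n(A)$, while for any $F$ of rank at most $n-1$ there is a unit vector $x\in\operatorname{span}\{f_1,\dots,f_n\}$ with $Fx=0$, whence
\begin{equation*}
\|A-F\|\ \ge\ \|(A-F)x\|\ =\ \|Ax\|\ =\ \Bigl(\sum_{i\le n}s_i(A)^2|\langle x,f_i\rangle|^2\Bigr)^{1/2}\ \ge\ s_n(A).
\end{equation*}
Second, for the ideal property you should also remark that $TA$ and $AT$ are compact (product of a bounded and a compact operator), so that the estimate $\sum_n s_n(TAS)^p\le\|T\|^p\|S\|^p\|A\|_p^p$ really places them in $\mathcal{B}_p(H)$ as defined. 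As an aside, the product bound $s_n(TAS)\le\|T\|\,\|S\|\,s_n(A)$ can alternatively be obtained without approximation numbers, from the operator inequality $|TA|^2=A^*T^*TA\le\|T\|^2A^*A$ together with Weyl monotonicity of eigenvalues; your variational route buys a cleaner simultaneous treatment of left and right multiplication, which is exactly what (3) and the last inequality of (5) require.
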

 It is known that for $0<p<1$, instead
of the triangle inequality, which does not hold in this case, we have $\|A+B\|_p^p\leq \|A\|_p^p+\|B\|_p^p$ for $A, B \in  \mathcal{B}_p(H)$. The so-called \textit{Hilbert-Schmidt class} $\mathcal{B}_2(H)$ is a Hilbert space under the inner product $\langle A, B \rangle_{HS} := {\rm tr}(B^*A)$.
The ideal $\mathcal{B}_1(H)$ is called the \textit{trace class}. For $1<p<\infty$,  $(\mathcal{B}_p(H), \|.\|_p)$ is a uniformly convex space as consequence of the classical McCarthy-Clarkson inequality (see \cite{mccarthy}, Th. 2.7).

If $x,y \in \cH$, then we denote $x\otimes y$ the rank 1 operator defined on $\cH$ by $(x\otimes y)(z)=\langle z, y\rangle x$, then $\|x\otimes y\|=\|x\|\|y\|=||x\otimes y||_p.$

The usual operator norm and the Schatten $p$-norms are special examples
of unitarily invariant norms, i.e.  that satisfies  the invariance property 
$
||| UXV|||=|||X|||,
$
for any pair of unitary operators $U,V$. On the theory of norm ideals and
their associated unitarily invariant norms, a reference for this subject is \cite{gohberg-krein}.

\section{Orthogonality and norm- parallelism of operators}

Let $(X, \|\cdot\|)$ be a normed space over $\mathbb{K} \in \{\mathbb{R}, \mathbb{C}\}$. We say that $x\in X$ is \textit{norm parallel} to $y\in X$ (\cite{S}), in short $x\parallel y$, if there exists $\lambda\in\mathbb{T}=\{\alpha\in\mathbb{K}: \,\,|\alpha|=1\}$ such that
\begin{equation} \label{defiparallel}
\|x + \lambda y\| = \|x\| + \|y\|.
\end{equation}
In the framework of inner product spaces or uniformly convex spaces, the norm parallel relation is exactly the usual vectorial parallel relation, that is,
$x\parallel y$ if and only if $x$ and $y$ are linearly dependent. In the setting of normed linear spaces, two linearly
dependent vectors are norm parallel, but the converse is false in general. Several characterizations of the norm parallelism for Hilbert space operators
were given in \cite{G, Z, Z.M.1}.

The orthogonality between two vectors of $X$, may be defined in several ways.
The so-called \textit{Birkhoff--James} orthogonality reads as follows (see \cite{B, J}):
for $x,y \in X$ it is said that $x$ is \textit{Birkhoff--James orthogonal} (B-J) to $y$, denoted by $x \perp y$, whenever
\begin{equation} \label{defiBJeq}
\|x\|\leq \|x + \gamma y\|
\end{equation}
for all $\gamma \in \mathbb{K}$. If $X$ is an inner product space, then B-J orthogonality is equivalent to the usual orthogonality given by the inner product.
It is also easy to see that B-J orthogonality is nondegenerate, is homogeneous, but it is neither symmetric nor additive.

There are other definitions of orthogonality with different properties. 
Orthogonality in the setting of Hilbert space operators has attracted attention of several mathematicians.
We cite some papers which studied these notions  in chronological order, see for instance \cite{stampfli, kittaneh_laa_1991, magajna, B.S,  benitez-fernandez-soriano, B.G, P.S.G, sain-paul-hait, BCMWZ}.

Let $\mathcal{B}_p(H)$ be a $p$-Schatten ideal with $p>0$.
According to \eqref{defiparallel}, we say that $A,B\in \mathcal{B}_p(H)$
are norm parallel, denoted by $A{\parallel}^p B$,
if there exists $\lambda\in\mathbb{T}$ such that
${\|A + \lambda B\|}_p = {\|A\|}_p + {\|B\|}_p.$

In \cite{BCMWZ} we characterized the norm- parallelism between two operators in $\mathcal{B}_p(H)$, as follows.

\begin{theorem}\label{th.987}
Let $A, B\in\mathcal{B}_p(H)$ with polar decompositions $A=U|A|$ and $B=V|B|$, respectively. If $1<p<\infty$, then the following conditions are equivalent:
\begin{itemize}
\setlength\itemsep{.8em}
\item[(i)] $A{\parallel}^p B$.
\item[(ii)] ${\|A\|}_p\,\big|{\rm tr}(|A|^{p-1}U^*B)\big| = {\|B\|}_p\,{\rm tr}(|A|^p)$.
\item[(iii)] ${\|B\|}_p\,\big|{\rm tr}(|B|^{p-1}V^*A)\big| = \|A\|_p\,{\rm tr}(|B|^p)$.
\item[(iv)] $A, B$ are linearly dependent.
\end{itemize}
\end{theorem}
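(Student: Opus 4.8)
The plan is to reduce the whole statement to the single equivalence (i) $\Leftrightarrow$ (ii). Indeed, (i) $\Leftrightarrow$ (iv) is already Proposition~\ref{paralelo-dependiente}, while (iii) is nothing but (ii) with the roles of $A$ and $B$ interchanged: norm-parallelism is symmetric, since multiplying $A+\lambda B$ by $\bar\lambda\in\T$ inside the norm turns $\|A+\lambda B\|_p=\|A\|_p+\|B\|_p$ into $\|B+\bar\lambda A\|_p=\|B\|_p+\|A\|_p$. Thus applying the equivalence (i) $\Leftrightarrow$ (ii) to the pair $(B,A)$, with its polar decomposition $B=V|B|$, produces exactly (iii). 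We may also assume $A,B\neq 0$, the degenerate cases being trivial for both (i) and (ii).

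The engine is the smoothness of the Schatten norm when $1<p<\infty$. Let $q$ be the conjugate index, so $1<q<\infty$ and $(p-1)q=p$. Since $(\mathcal{B}_p(H))^*\cong\mathcal{B}_q(H)$ isometrically through the trace pairing $(X,Y)\mapsto{\rm tr}(YX)$, and $\mathcal{B}_q(H)$ is uniformly convex by McCarthy--Clarkson, hence strictly convex, the space $\mathcal{B}_p(H)$ is smooth; that is, every nonzero operator has a \emph{unique} norming functional. I would then write down this functional explicitly as
$$\phi_A(X)=\frac{1}{\|A\|_p^{p-1}}\,{\rm tr}\big(|A|^{p-1}U^*X\big),\qquad X\in\mathcal{B}_p(H),$$
and check its two defining properties directly. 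Evaluating at $A=U|A|$ and using $U^*U|A|=|A|$ gives $\phi_A(A)=\|A\|_p^{-(p-1)}\,{\rm tr}(|A|^p)=\|A\|_p$; while the trace--H\"older inequality $|{\rm tr}(YX)|\le\|Y\|_q\|X\|_p$ with $Y=|A|^{p-1}U^*$, together with $\||A|^{p-1}U^*\|_q=\||A|^{p-1}\|_q=({\rm tr}\,|A|^{(p-1)q})^{1/q}=\|A\|_p^{p-1}$ (using $(p-1)q=p$), shows $\|\phi_A\|\le 1$; the value $\phi_A(A)=\|A\|_p$ forces $\|\phi_A\|=1$. By smoothness this $\phi_A$ is the only norming functional of $A$.

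The bridge to parallelism is a general fact valid in any smooth normed space $X$: for $x,y\in X$ one has $x\parallel y$ if and only if $|\phi_x(y)|=\|y\|$, where $\phi_x$ is the unique norming functional of $x$. For the sufficiency, choosing $\lambda\in\T$ with $\lambda\phi_x(y)=\|y\|$ yields $\|x+\lambda y\|\ge\phi_x(x+\lambda y)=\|x\|+\|y\|$, and the triangle inequality forces equality. For the necessity, pick a norming functional $\psi$ of $x+\lambda y$; the chain $\|x\|+\|y\|=\psi(x)+\lambda\psi(y)\le|\psi(x)|+|\psi(y)|\le\|x\|+\|y\|$ collapses, so $\psi$ norms both $x$ and $y$, and uniqueness gives $\psi=\phi_x$, whence $|\phi_x(y)|=\|y\|$. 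Applying this with $x=A$, $y=B$ and inserting $\phi_A$, the condition $|\phi_A(B)|=\|B\|_p$ becomes $|{\rm tr}(|A|^{p-1}U^*B)|=\|A\|_p^{p-1}\|B\|_p$; multiplying by $\|A\|_p$ and using ${\rm tr}(|A|^p)=\|A\|_p^p$ gives precisely (ii).

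The step I expect to be the true obstacle is the uniqueness of $\phi_A$, i.e. the smoothness of $\mathcal{B}_p(H)$: verifying $\phi_A(A)=\|A\|_p$ and $\|\phi_A\|=1$ is routine, but pinning down $\phi_A$ as the \emph{only} norming functional is where the hypothesis $1<p<\infty$ is indispensable. At $p=1$ the dual $\bh$ is not strictly convex, norming functionals cease to be unique, and the $2\times2$ example preceding the statement exhibits operators that are norm-parallel yet linearly independent, so the equivalence genuinely breaks down there.
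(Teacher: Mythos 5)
There is no in-paper proof to compare against: the paper imports Theorem~\ref{th.987} verbatim from \cite{BCMWZ} (``In \cite{BCMWZ} we characterized\dots''), so your attempt can only be judged on its own merits and against the route the paper attributes to that source. On its merits, your proof is correct: the symmetry reduction of (iii) to (ii), the disposal of the degenerate cases, the verification that $\phi_A(X)=\|A\|_p^{1-p}\,{\rm tr}(|A|^{p-1}U^*X)$ is a norming functional (using $U^*U|A|=|A|$, trace--H\"older, and $\||A|^{p-1}\|_q=\|A\|_p^{p-1}$ since $(p-1)q=p$), the smoothness of $\mathcal{B}_p(H)$ from strict convexity of its dual $\mathcal{B}_q(H)$, and the two-sided ``bridge'' lemma (where the collapse of $\|x\|+\|y\|=\psi(x)+\lambda\psi(y)\le|\psi(x)|+|\psi(y)|\le\|x\|+\|y\|$ forces $\psi(x)=\|x\|$, hence $\psi=\phi_x$) are all sound. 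Moreover, this is essentially the same approach as the cited source: the paper itself notes, before Corollary~\ref{p-normaloid}, that the characterization in \cite{BCMWZ} was obtained ``using the notion of semi-inner product in the sense of Lumer,'' and in a smooth space the Lumer--Giles semi-inner product is precisely $[Y,X]=\|X\|_p\,\phi_X(Y)$, so your norming-functional formulation is the same device in dual language.

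One caveat on self-containedness: you obtain (i)$\Leftrightarrow$(iv) by citing Proposition~\ref{paralelo-dependiente}, which is legitimate within this paper (it is stated there as a prior result), but in \cite{BCMWZ} that proposition is part of the same circle of ideas as the theorem itself, so as a from-scratch argument this step risks circularity. It is easily repaired: (iv)$\Rightarrow$(i) is a one-line computation, and (i)$\Rightarrow$(iv) follows from strict convexity of $\mathcal{B}_p(H)$ (McCarthy--Clarkson), since in a strictly convex space $\|x+z\|=\|x\|+\|z\|$ with $x,z\neq0$ forces $z=cx$ with $c>0$; applying this to $z=\lambda B$ gives linear dependence.
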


Let $A =\begin{bmatrix}
1 & 0 \\
0 & 0
\end{bmatrix}$ and $I = \begin{bmatrix}
1 & 0 \\
0 & 1
\end{bmatrix}$. Then, it is trivial that
$\|A + I\|_1 = 3 = \|A\|_1 + \|I\|_1,$
$\|A + I\| = 2 = \|A\| + \|I\|.$
However, it is evident that $A$ and $I$ are linearly independent.

In the last years, different authors have obtained characterizations of the norm parallelism problem for trace-class operators on a Hilbert space $\cH$.	In the context of an infinite dimensional Hilbert space we refer \cite{li-li}  and \cite{Z} and in  other hand, for a finite dimensional space we mention \cite{Li-Sch}. 

%



\section{Properties of  $w_N$  norms}
We start this section with basic properties of the norm $w_N(\cdot)$.

\begin{proposition}
For any $A\in \bh$ holds $\wn(A)=\max\limits_{\theta\in [0, 2\pi]} N(Re(e^{i\theta}A))$.
\end{proposition}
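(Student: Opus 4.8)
The plan is to reduce the supremum defining $\wn(A)$ to the maximum of a continuous, $2\pi$-periodic real-valued function over a compact interval. Write $g(\theta) = N(Re(e^{i\theta}A))$ for $\theta \in \R$; by definition \eqref{wndef}, $\wn(A) = \sup_{\theta \in \R} g(\theta)$, and the claim amounts to showing that this supremum is attained on $[0, 2\pi]$.

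First I would record the periodicity of $g$. Since $e^{i(\theta + 2\pi)} = e^{i\theta}$, we have $Re(e^{i(\theta+2\pi)}A) = Re(e^{i\theta}A)$, hence $g(\theta + 2\pi) = g(\theta)$. Consequently $\sup_{\theta \in \R} g(\theta) = \sup_{\theta \in [0, 2\pi]} g(\theta)$, which already matches the two expressions being compared apart from the $\sup$-versus-$\max$ distinction.

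The key step is to prove that $g$ is continuous on $[0, 2\pi]$, for then a continuous function on a compact set attains its supremum, upgrading the $\sup$ to a $\max$. Using $Re(e^{i\theta}A) = \tfrac12(e^{i\theta}A + e^{-i\theta}A^*)$, for $\theta, \theta' \in \R$ the triangle inequality for $N$ gives
$$
N\big(Re(e^{i\theta}A) - Re(e^{i\theta'}A)\big) \le \tfrac12 |e^{i\theta} - e^{i\theta'}|\, N(A) + \tfrac12 |e^{-i\theta} - e^{-i\theta'}|\, N(A^*).
$$
Since $\theta \mapsto e^{\pm i\theta}$ are continuous scalar functions, the right-hand side tends to $0$ as $\theta' \to \theta$, so $\theta \mapsto Re(e^{i\theta}A)$ is continuous from $\R$ into $(\bh, N)$. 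Combining this with the reverse triangle inequality $|N(x) - N(y)| \le N(x - y)$, valid for any norm, I conclude that $g$ is continuous.

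Finally, since $[0, 2\pi]$ is compact and $g$ is continuous on it, $g$ attains its maximum there, whence $\wn(A) = \sup_{\theta \in \R} g(\theta) = \max_{\theta \in [0, 2\pi]} g(\theta)$. There is no serious obstacle in this argument; the only point requiring a little care is the continuity of $\theta \mapsto Re(e^{i\theta}A)$ with respect to the abstract norm $N$, which is handled by the displayed estimate and uses nothing about $N$ beyond its being a norm.
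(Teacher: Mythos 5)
Your proof is correct and follows exactly the paper's approach: the paper likewise argues that the map $\theta \mapsto N(Re(e^{i\theta}A))$ is continuous and $[0,2\pi]$ is compact, so the supremum is attained. You merely supply the details the paper leaves implicit (the $2\pi$-periodicity and the explicit continuity estimate via the triangle inequality), which are handled correctly.
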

\begin{proof}
It is a simple consequence of the following facts: the interval $[0, 2\pi]$ is a compact set and the function $\theta \to N(Re(e^{i\theta}A))$ is continuous. 
\end{proof}

\begin{lemma}\label{basic prop}
For any $A, X \in \bh$  the following statements hold.
\begin{enumerate}
	\setlength\itemsep{.8em}
\item $\wn(A)=\sup\limits_{\theta\in \R}N(Im(e^{i\theta}A))$.
\item $\wn(A)=\sup\limits_{\alpha^2+\beta^2=1} N(\alpha Re(A)+\beta Im(A))$.
\item $ \max\{\frac12 N(A), \frac 12 N(A^*)\}\leq \wn(A)\leq \frac12(N(A)+N(A^*))$.
\item If $N(\cdot)$ is submultiplicative then $\wn(AX\pm XA^*)\leq (N(A)+N(A^*))\wn(X)$. 
\item If $\wn(A)=\frac12 N(A)$ then $Re(e^{i\theta}A)\parallel ^N Im(e^{i\theta}A)$ for all $\theta\in \R$.

\end{enumerate}
\end{lemma}
\begin{proof}
Item (4) are proved in \cite{Abu-kittaneh}. The proofs of the rest are straightforward.
\end{proof}

We note that the upper bound  obtained in item (3) is a simple consequence of the following inequality which holds for any norm $N$,  
	$$
	N\left(\frac{e^{i\theta}A+e^{-i\theta}A^*}{2}\right)\leq \frac{N(A)+N(A^*)}{2}.
	$$
	This concept is called midpoint point convexity an it was introduced by Jensen. In the context of continuity, midpoint convexity means convexity, and for this reason we have the following  inequality 
	$$
	N\left(\frac{e^{i\theta}A+e^{-i\theta}A^*}{2}\right)\leq  \int_0^1 N((1-\lambda)e^{i\theta}A+\lambda e^{-i\theta}A^*)\:d\lambda  \leq  \frac{N(A)+N(A^*)}{2}, 
	$$
	as a particular case of the classical Hermite-Hadamard (H-H)  inequality for the function $f_{\theta}(\lambda)=N((1-\lambda)e^{i\theta}A+\lambda e^{-i\theta}A^*)$ where $\theta\in \R$ and $A\in \bh$. For  sake of completeness,  we recall a refinement of  H-H inequality: 
let $f$ be a real valued function which is convex on the interval
		$[a,b]$. Then
		\begin{eqnarray}\label{aa}
		f\left(\frac{a+b}{2}\right)&\leq&
		\frac{1}{b-a} \int_a^bf(t)\,dt
		\leq
		\frac{1}{2}\left[	f\left(\frac{a+b}{2}\right)+\frac{f(a)+f(b)}{2}
		\right]\nonumber\\&\leq& \frac{f(a)+f(b)}{2}.\
		\end{eqnarray}

Inspired by the the previous result, we derive several  inequalities for the generalized numerical radius.  In particular we  obtain an improvement of (3) in Lemma \ref{basic prop}.
	\begin{theorem}
		Let $A\in \bh$ and $N(\cdot)$ be a norm on $\bh$. Then 
	\begin{eqnarray}\label{cota_integral}
	\wn(A)&\leq & \sup\limits_{\theta\in \R} \int_0^1 N((1-\lambda)e^{i\theta}A+\lambda e^{-i\theta}A^*)\:d\lambda  \leq \frac 12 \wn(A) +\frac14(N(A)+N(A^*))\nonumber\\&\leq& \frac12(N(A)+N(A^*)).
	\end{eqnarray}
	\end{theorem}
	\begin{proof}
		For each $\theta \in \R$ we consider the  function $f_{\theta}$ defined above.  It is easy to see that $f_{\theta}$ is a convex function in $[0, 1]$ and so by the previous statement we have
		\begin{eqnarray}
		\frac 12N(e^{i\theta}A+e^{-i\theta}A^*)&\leq& \int_0^1 N((1-\lambda)e^{i\theta}A+\lambda e^{-i\theta}A^*)\:d\lambda \nonumber\\
		&\leq &\frac12\left[ \frac 12N(e^{i\theta}A+e^{-i\theta}A^*) + \frac12(N(A)+N(A^*))\right]\nonumber\\ &\leq& \frac12(N(A)+N(A^*)).\nonumber\ 
		\end{eqnarray}
		Taking the supremum over $\theta \in \R$ we conclude that 
		\begin{eqnarray}
		\wn(A)&\leq & \sup\limits_{\theta\in \R} \int_0^1 N((1-\lambda)e^{i\theta}A+\lambda e^{-i\theta}A^*)\:d\lambda  \leq \frac 12 \wn(A) +\frac14(N(A)+N(A^*))\nonumber\\&\leq& \frac12(N(A)+N(A^*)).\nonumber\
		\end{eqnarray}
This completes the proof of the theorem.
	\end{proof}

\begin{proposition}
Let $\phi:[0, \infty)\to \R$ be any nondecreasing convex function or midpoint convex function and $N(\cdot)$ be a norm on $\bh$,  then
\begin{equation}\label{Dragomirradius2}
\phi(\wn(A))\leq \phi\left(\frac12 \left[\wn(A) +\frac12(N(A)+N(A^*)\right]\right)\leq \frac12\phi(\wn(A))+\frac12\phi\left(\frac12(N(A)+N(A^*))\right).\nonumber\\
\end{equation}
\end{proposition}
Clearly convexity implies midpoint-convexity. However, there exist midpoint-convex functions that are not convex. Such functions can be very strange and interesting. In particular if $\phi(x)=x^r$ with $r\geq 1$ we have
$$
\wn^r(A)\leq \frac{1}{2^r} \left[\wn(A) +\frac12(N(A)+N(A^*))\right]^r\leq \frac12\wn^r(A)+\frac {1}{2^{r+1}}\left(N(A)+N(A^*)\right)^r.
$$

\begin{remark}Let $N(\cdot)$ be a norm on $\bh$. Following  Kikianty and Dragomir in \cite{KD} we introduce a norm on $\bh\times \bh$, 
	which we call the $r-HH$-norm induced by $N(\cdot)$,  and is defined as follows:
$$
N((A, B))_{r-HH}:=\sup\limits_{\theta\in \R}\left( \int_0^1 N((1-\lambda)e^{i\theta}A+\lambda e^{-i\theta}B^*)^r\:d\lambda\right)^{\frac 1r},
$$
for any $1\leq r<\infty$ and $(A, B)\in \bh\times \bh$. From the classical H-H inequality, we obtain 
$$
\sup\limits_{\theta\in \R} N\left(\frac{e^{i\theta}A+e^{-i\theta}B^*}{2}\right)\leq \sup\limits_{\theta\in \R}\left( \int_0^1 N((1-\lambda)e^{i\theta}A+\lambda e^{-i\theta}B^*)^r\:d\lambda\right)^{\frac 1r}\leq \left(\frac{N(A)^r+N(B)^r}{2}\right)^{\frac 1r}, 
$$
and in particular if we consider the pair $(A, A^*)$ we get a norm on $\bh$ given by $\overline{N}(A)_{r-HH}:=N((A, A^* ))_{r-HH}$ and 
\begin{eqnarray}\label{wvsHH}
\wn(A)\leq \sup\limits_{\theta\in \R}\left( \int_0^1 N((1-\lambda)e^{i\theta}A+\lambda e^{-i\theta}A^*)^r\:d\lambda\right)^{\frac 1r} \leq \left(\frac{N(A)^r+N(A^*)^r}{2}\right)^{\frac 1r}.
\end{eqnarray}
It is clear that $\overline{N}(\cdot)_{r-HH}$ is a selfadjoint norm on $\bh$ and $\overline{N}(A)_{r-HH}=N(A)$ if $A$ is a selfadjoint operator.
	We note that $\overline{N}(\cdot)_{1-HH}$ and $\wn(\cdot)$ norms are equivalent in $\bh$, since by inequality \eqref{cota_integral} we have 
	$$
	\wn(A)\leq \sup\limits_{\theta\in \R}\int_0^1 N((1-\lambda)e^{i\theta}A+\lambda e^{-i\theta}A^*)\:d\lambda \leq \frac 32 \wn(A).
	$$
\end{remark}

\begin{proposition}
	Let $A\in \bh$ and $r\geq 1$.  Then $w_{\overline{N}_{r-HH}}(A)=\wn(A).$
	\begin{proof}
		For each $\theta \in \R$ the operator $Re(e^{i\theta}A)$ is selfadjoint, then
		$$
		w_{\overline{N}_{r-HH}}(A)= \sup\limits_{\theta\in \R} {\overline{N}_{r-HH}}(Re(e^{i\theta}A))=\sup\limits_{\theta\in \R} N(Re(e^{i\theta}A))=\wn(A).
		$$
	\end{proof}
	\end{proposition}

In the next theorem, we establish a lower bound for the generalized numerical radius.

\begin{theorem}
	Let $A\in \bh$ and $N(\cdot)$ be a norm on $\bh$. Then 
		\begin{eqnarray}\label{desigualdad2}
(0\leq)2 \sup\limits_{\theta\in \R} \int_0^1 N((1-\lambda)e^{i\theta}A+\lambda e^{-i\theta}A^*)\:d\lambda  -\frac12(N(A)+N(A^*))\leq w_N(A).
	\end{eqnarray}
\begin{proof}
We have by inequality \eqref{cota_integral}
\begin{eqnarray}
0&\leq& \max\{\frac12 N(A), \frac 12 N(A^*)\}-\frac 14 (N(A)+N(A^*))\leq w_N(A)-\frac 14 (N(A)+N(A^*))\nonumber\\
&\leq & \sup\limits_{\theta\in \R} \int_0^1 N((1-\lambda)e^{i\theta}A+\lambda e^{-i\theta}A^*)\:d\lambda -\frac14(N(A)+N(A^*)) \leq \frac 12 \wn(A). \nonumber\
\end{eqnarray}
\end{proof}
\end{theorem}

	\begin{corollary}
		Let $A\in \bh$ and $N(\cdot)$ be a norm on $\bh$. Then 
		\begin{eqnarray}\label{desigualdad1}
		\frac14(N(A)+N(A^*))\leq  \sup\limits_{\theta\in \R} \int_0^1 N((1-\lambda)e^{i\theta}A+\lambda e^{-i\theta}A^*)\:d\lambda \leq 	\frac12(N(A)+N(A^*)) .
		\end{eqnarray}
	\end{corollary}
	\begin{proof}
		It is an immediate consequence of \eqref{cota_integral} and \eqref{desigualdad2}.
\end{proof}

Our next goal  is to determine when $\wn$ coincides with the upper bound $\frac 1 2(N(A)+N(A^*))$.
In the context of bounded linear operators on Hilbert spaces, we notice that if $A\in \mathcal{B}(H)$ satisfies $\|A\|=w(A)$ (i.e. $A$ is a normaloid operator) if and only if  $A\parallel I$ (see Proposition 4.2 in \cite{BCMWZ}).  In \cite{Z.M.1}, the authors investigated the case when an operator is parallel to the identity operator in the context of $\bh$. Combining the previous results we have the following characterization: let $A\in \bh$ then 
$$
w(A)=\|A\|  \Leftrightarrow A\parallel I \Leftrightarrow  A\parallel A^*.
$$ 
We obtain an analogous result for $\wn$.
\begin{theorem}\label{cotasuperior}
Let $A\in \bh$. The following conditions are equivalent:
\begin{enumerate}
	\setlength\itemsep{.5em}
\item $\wn(A)=\frac12(N(A)+N(A^*)).$
\item $A{\parallel}^N A^*$.
\end{enumerate}
Moreover, if $N(\cdot)$ is selfadjoint the previous conditions are equivalent to $\wn(A)=N(A)$.
\end{theorem}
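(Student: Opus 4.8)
The plan is to recast $\wn(A)$ in a form that exhibits the parallelism condition directly. First I would expand the real part as $Re(e^{i\theta}A)=\tfrac12\bigl(e^{i\theta}A+e^{-i\theta}A^*\bigr)$ and factor out the unimodular scalar $e^{i\theta}$. Using the absolute homogeneity of the norm, $N(cX)=|c|N(X)$ for $c\in\mathbb{C}$, this gives
\[
N\bigl(Re(e^{i\theta}A)\bigr)=\tfrac12\,N\bigl(e^{i\theta}A+e^{-i\theta}A^*\bigr)=\tfrac12\,N\bigl(A+e^{-2i\theta}A^*\bigr).
\]
As $\theta$ ranges over $\R$, the scalar $e^{-2i\theta}$ sweeps out all of $\T$, so taking the supremum yields the clean identity
\[
\wn(A)=\tfrac12\sup_{\lambda\in\T}N\bigl(A+\lambda A^*\bigr),
\]
and by the Proposition preceding Lemma \ref{basic prop} this supremum is attained at some $\lambda_0\in\T$, since $\T$ is compact and $\lambda\mapsto N(A+\lambda A^*)$ is continuous.

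Next I would record the elementary estimate coming from the triangle inequality: for every $\lambda\in\T$ one has $N(A+\lambda A^*)\leq N(A)+|\lambda|\,N(A^*)=N(A)+N(A^*)$, which is exactly the upper bound of Lemma \ref{basic prop}(3). Consequently, condition (1), namely $\wn(A)=\tfrac12\bigl(N(A)+N(A^*)\bigr)$, holds if and only if $\sup_{\lambda\in\T}N(A+\lambda A^*)=N(A)+N(A^*)$. Because the supremum is attained, this equality is equivalent to the existence of a single $\lambda_0\in\T$ with $N(A+\lambda_0 A^*)=N(A)+N(A^*)$, which is precisely the definition \eqref{defiparallel} of $A\,{\parallel}^N A^*$. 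Thus both implications (1)$\Rightarrow$(2) and (2)$\Rightarrow$(1) follow at once from this single reformulation.

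For the final clause, if $N(\cdot)$ is selfadjoint then $N(A^*)=N(A)$, whence $\tfrac12\bigl(N(A)+N(A^*)\bigr)=N(A)$ and condition (1) reads simply $\wn(A)=N(A)$; the added equivalence is therefore immediate. I expect no genuine obstacle here: the only step requiring care is the rescaling, where one must invoke complex homogeneity of $N$ and check that $\theta\mapsto e^{-2i\theta}$ is onto $\T$, so that the supremum over $\theta\in\R$ becomes a (attained) supremum over $\lambda\in\T$. Once $\wn(A)$ is written as $\tfrac12\sup_{\lambda\in\T}N(A+\lambda A^*)$, the equivalence with norm-parallelism is essentially a restatement of the definition combined with the attainment of the maximum on the compact set $\T$.
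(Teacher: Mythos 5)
Your proposal is correct and takes essentially the same route as the paper's proof: both rewrite $N(Re(e^{i\theta}A))$ as $\tfrac12 N\bigl(A+e^{-2i\theta}A^*\bigr)$, use the attainment of the supremum (the compactness Proposition at the start of the section) together with the triangle-inequality bound $\wn(A)\leq\tfrac12(N(A)+N(A^*))$ from Lemma \ref{basic prop}, and observe that selfadjointness of $N(\cdot)$ collapses $\tfrac12(N(A)+N(A^*))$ to $N(A)$. Your packaging of the argument as the single identity $\wn(A)=\tfrac12\sup_{\lambda\in\T}N(A+\lambda A^*)$ is merely a cleaner presentation of the same idea.
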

\begin{proof}
Suppose that $A{\parallel}^N A^*$, then there exists $\lambda=e^{i2\theta_0}\in \T$ such that $N(A+\lambda A^*)=N(A)+N(A^*).$ This equality implies 
$$
N\left( \frac{e^{-i\theta_0}A+e^{i\theta_0}A^*}{2}\right)=\frac12(N(A)+N(A^*)).
$$ 
On the other hand, if $\wn(A)=\frac12(N(A)+N(A^*))$ there exists $\theta_0\in [0, 2\pi]$ such that 
$N(Re(e^{i\theta_0}A))=\frac12(N(A)+N(A^*))$, that is $A{\parallel}^N A^*$.
\end{proof}

\begin{corollary}
Let $A\in \bh$ and $N(\cdot)$ a selfadjoint and strictly convex norm on $\bh$. Then, the following conditions are equivalent:
\begin{enumerate}
	\setlength\itemsep{.5em}
\item $\wn(A)=N(A)$.
\item $A{\parallel}^N A^*$.
\item $A=\alpha A^*$ with $\alpha \in \T$.
\end{enumerate}
\end{corollary}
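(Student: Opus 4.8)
The plan is to build on Theorem~\ref{cotasuperior}, which already records the equivalence of (1) and (2) in the selfadjoint case (there $\frac12(N(A)+N(A^*))=N(A)$); so the only new content of the corollary is to fold (3) into this equivalence, and this is precisely where strict convexity will be used. I may assume $A\neq 0$, since for $A=0$ all three statements hold trivially. The implication (3)$\Rightarrow$(2) needs no convexity at all: if $A=\alpha A^*$ with $\alpha\in\T$, then $A^*=\bar\alpha A$, so $A$ and $A^*$ are linearly dependent, and as recalled in Section~3 any two linearly dependent vectors are norm parallel; hence $A{\parallel}^N A^*$.

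The heart of the matter is (2)$\Rightarrow$(3). Unwinding the definition of parallelism, there is some $\lambda\in\T$ with
$$
N(A+\lambda A^*)=N(A)+N(A^*)=N(A)+N(\lambda A^*),
$$
where the last step uses $N(\lambda A^*)=|\lambda|\,N(A^*)=N(A^*)$. I would then invoke the standard rigidity of the triangle inequality in a strictly convex space: if $x,y$ are nonzero and $N(x+y)=N(x)+N(y)$, then $x=t\,y$ for some real $t>0$, because the common unit vector $(x+y)/N(x+y)$ is a nontrivial convex combination of $x/N(x)$ and $y/N(y)$, and in a strictly convex norm such a combination can have norm $1$ only when the two unit vectors coincide. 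Applying this with $x=A$ and $y=\lambda A^*$ yields $A=t\lambda A^*$ with $t>0$. Taking $N(\cdot)$ of both sides and using selfadjointness gives $N(A)=t\,|\lambda|\,N(A^*)=t\,N(A)$, whence $t=1$ since $N(A)>0$. Therefore $A=\lambda A^*$, and setting $\alpha:=\lambda\in\T$ gives (3).

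The single genuinely nontrivial ingredient is the equality-in-the-triangle-inequality lemma, which is exactly the manifestation of strict convexity I need; everything else is bookkeeping with the modulus-one scalar $\lambda$ and the hypothesis $N(A)=N(A^*)$. The main obstacle is therefore to deploy strict convexity in the sharp form that upgrades triangle-inequality equality to positive proportionality, after which selfadjointness pins the proportionality constant down to modulus one and closes the equivalence.
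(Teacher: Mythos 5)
Your proof is correct and follows essentially the route the paper intends (the corollary is stated there without proof): the equivalence (1)$\Leftrightarrow$(2) is exactly the selfadjoint case of Theorem~\ref{cotasuperior}, and (2)$\Leftrightarrow$(3) is the standard strict-convexity rigidity (equality in the triangle inequality forces positive proportionality), which is the same linear-dependence characterization of norm parallelism the paper invokes from Section~3. Your write-up merely makes explicit, correctly, the rigidity lemma and the normalization $t=1$ via $N(A)=N(A^*)$ that the paper leaves implicit.
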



\section{The case $w_p$}


In this section we focus on the particular case $N(\cdot)=\|\cdot\|_p$ for $1\leq p<\infty$, that is 
	$$w_p(A)=\sup_{\theta\in \R}\|Re(e^{i\theta}A)\|_p,$$
	for any $A\in \mathcal{B}_{p}(H)$.

First we will focus on remembering and obtaining new bounds for $w_2(\cdot)$,  for the particular case of  the Hilbert-Schmidt norm. In \cite{Abu-kittaneh}, the authors proved an explicit formula of 
  $w_2(\cdot)$    in terms of $\|A\|_2$ and $tr(A^2)$, more precisely
$$
w_2(A)=\sqrt{\frac{\|A\|_2^2+|tr(A^2)|}{2}}
$$
for any $A\in \mathcal{B}_2(\cH)$ and  the existence of lower and upper bounds, which are
$$\frac{1}{\sqrt{2}}\|A\|_2\leq w_2(A)\leq \|A\|_2. $$

The following result improve the existence of a new lower bound for $w_2(\cdot)$.

\begin{theorem}
	Let $A\in \mathcal{B}_2(\cH)$. Then
	\begin{eqnarray}
	\max \left\lbrace  \frac{1}{\sqrt 2} \|A\|_2, \frac{\|A\|_2+|tr(A^2)|^{1/2}}{2} \right\rbrace \leq w_2(A).
	\end{eqnarray}
	
\end{theorem}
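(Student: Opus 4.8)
The plan is to use the explicit formula $w_2(A)=\sqrt{(\|A\|_2^2+|tr(A^2)|)/2}$ recalled just above the statement and to verify the two candidate lower bounds separately, since the claim is simply that $w_2(A)$ dominates each of the two quantities inside the maximum. The first bound, $\frac{1}{\sqrt2}\|A\|_2\le w_2(A)$, is immediate: because $|tr(A^2)|\ge 0$, discarding this nonnegative summand under the square root can only decrease the value, so $w_2(A)\ge\sqrt{\|A\|_2^2/2}=\|A\|_2/\sqrt2$. This merely recovers the left-hand bound already noted from \cite{Abu-kittaneh}.

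For the second bound I would introduce the shorthand $a=\|A\|_2$ and $b=|tr(A^2)|^{1/2}$, so that the target inequality $\frac{\|A\|_2+|tr(A^2)|^{1/2}}{2}\le w_2(A)$ becomes $\frac{a+b}{2}\le\sqrt{\frac{a^2+b^2}{2}}$. Both sides are nonnegative, so I can square and clear the denominators, reducing the claim to $(a+b)^2\le 2(a^2+b^2)$, that is, to $0\le a^2-2ab+b^2=(a-b)^2$, which always holds. Tracing the equality case back, this second bound is sharp precisely when $a=b$, i.e. when $|tr(A^2)|=\|A\|_2^2$.

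There is essentially no serious obstacle here; the inequality is elementary once the explicit formula for $w_2$ is in hand, and the argument is just the quadratic rearrangement $(a-b)^2\ge 0$. The one remark worth recording is that the two quantities are genuinely comparable, which follows from the Cauchy--Schwarz inequality for the Hilbert--Schmidt inner product: writing $tr(A^2)=\langle A,A^*\rangle_{HS}$ gives $|tr(A^2)|\le\|A\|_2\,\|A^*\|_2=\|A\|_2^2$, hence $b\le a$. This comparison is not strictly needed for the algebraic step $(a-b)^2\ge 0$, but it confirms that the second lower bound can strictly exceed the first exactly when $tr(A^2)$ is large, so that taking the maximum of the two established bounds is meaningful and yields the stated inequality.
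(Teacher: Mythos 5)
Your proposal is correct and follows essentially the same route as the paper: both rest on the explicit formula $w_2(A)=\sqrt{(\|A\|_2^2+|tr(A^2)|)/2}$ together with the arithmetic--quadratic mean inequality applied to $a=\|A\|_2$ and $b=|tr(A^2)|^{1/2}$, the only difference being that the paper cites $\mathcal{A}(a,b)\leq\mathcal{Q}(a,b)$ as well known while you verify it via $(a-b)^2\geq 0$, and the paper takes the first bound from \cite{Abu-kittaneh} while you rederive it by discarding $|tr(A^2)|$ under the square root.
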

\begin{proof}
	For  $a, b\geq 0$, the arithmetic mean $\mathcal{A}(a, b)$ and  quadratic mean $\mathcal{Q}(a, b)$ are, respectively,
	defined by 
	$$
	\mathcal{A}(a, b)=\frac{a+b}{2} \qquad {\rm and} \qquad \mathcal{Q}(a, b)=\sqrt{\frac{a^2+b^2}{2}}.
	$$
	It is well known that $\mathcal{A}(a, b)\leq \mathcal{Q}(a, b)$, and this inequality implies that 
	\begin{eqnarray}
	\frac{\|A\|_2+|tr(A^2)|^{1/2}}{2}=\mathcal{A}(\|A\|_2, |tr(A^2)|^{1/2})\leq \mathcal{Q}(\|A\|_2, |tr(A^2)|^{1/2})=w_2(A). 
	\end{eqnarray}
\end{proof}

Observe that $\frac{1}{\sqrt 2} \|A\|_2$ is not comparable to $\dfrac{\|A\|_2+|tr(A^2)|^{1/2}}{2}$, as we see in the next examples:
\begin{enumerate}
	\item If $A^2=0$ but $A\neq 0$, then $\frac{1}{\sqrt 2} \|A\|_2>\frac{\|A\|_2}{2}$.
	\item Let $A=\begin{pmatrix}
	\alpha&0\\
	0&i\beta
	\end{pmatrix}$, with $\alpha,\beta\in \R$. Then, $A^2=\begin{pmatrix}
	\alpha^2&0\\
	0&-\beta^2
	\end{pmatrix}$ and 
	$$\|A\|_2+|tr(A^2)|^{1/2}=\sqrt{\alpha^2+\beta^2}+\sqrt{\left|\alpha^2-\beta^2\right|}.$$
	In particular, if $\alpha=1$, $\beta=1+\frac 1 n$ and $n=\frac{1}{10}$, 
	$$\left. \begin{array}{l}
	\|A\|_2+|tr(A^2)|^{1/2}=\dfrac{\sqrt{122}+\sqrt{120}}{2}\approx 10.999\\ 
	\frac{1}{\sqrt 2}\|A\|_2=\frac{\sqrt{122}}{\sqrt{2}}\approx 7.81.
	\end{array}\right\rbrace \Rightarrow \|A\|_2+|tr(A^2)|^{1/2}>\frac{1}{\sqrt 2}\|A\|_2$$
	On the other hand, if  $n\in \mathbb{R}$ is chosen that $\left|\frac 2 n+\frac{1}{n^2} \right|<0.001$, the reverse inequality holds.
	\item Moreover, if any $A$ fulfills $|tr(A^2)|^{1/2}>(\sqrt{2}-1)\|A\|_2$, then
	$$\sqrt{2}\|A\|_2<|tr(A^2)|^{1/2}+\|A\|_2\ \Rightarrow \frac{1}{\sqrt{2}}\|A\|_2<\dfrac{|tr(A^2)|^{1/2}+\|A\|_2}{2}.$$
\end{enumerate}

\begin{remark}
	Observe that if $A\neq 0$, 
	$$w_2(A)=\dfrac{|tr(A^2)|^{1/2}+\|A\|_2}{2}\Leftrightarrow|tr(A^2)|^{1/2}=\|A\|_2,$$ which means that 
	$$\dfrac{|tr(A^2)|^{1/2}+\|A\|_2}{2}=\|A\|_2=w_2(A),$$
	which occurs if and only if $A$ is normal and the squares of its nonzero eigenvalues have the same argument (by Corollary 2 in \cite{Abu-kittaneh}). 
\end{remark}

\begin{remark}
Using \eqref{wvsHH} for a nonzero operator $A\in \mathcal{B}_2(\cH)$ we have
	\begin{eqnarray}
	\frac{\|A\|_2^2+|tr(A^2)|}{2}\leq\sup\limits_{\theta\in \R}  \int_0^1\| ((1-\lambda)e^{i\theta}A+\lambda e^{-i\theta}A^*\|_2^2\:d\lambda \leq \|A\|_2^2.
	\end{eqnarray}
	but in this case the integral expression has an explicit formula, since
	\begin{eqnarray*}
		\int_0^1\| ((1-\lambda)e^{i\theta}A+\lambda e^{-i\theta}A^*\|_2^2\:d\lambda&=&\int_0^ 1\left[ ((1-\lambda)^2+\lambda^2)\|A\|_2^2+2\lambda(1-\lambda)tr(Re(e^{2i\theta}A^2))\right] d\lambda
	\end{eqnarray*}
	\begin{eqnarray*}
		&=&\|A\|_2^2\int_0^1\left(2\lambda^2-2\lambda+1\right)d\lambda +2 tr(Re(e^{2i\theta}A^2))\int_0^1\lambda-\lambda^2 d\lambda\\
		&=&\frac 2 3\|A\|_2^2+\frac 1 3tr(Re(e^{2i\theta}A^2))=\frac 2 3 \|A\|_2^2+\frac 1 3\left(2 \|Re(e^{i\theta}A)\|_2^2-\|A\|_2^2\right)\\
		&=&\frac 1 3\|A\|_2^2+\frac 2 3\|Re(e^{i\theta}A)\|_2^2
	\end{eqnarray*}
	holds for any $\theta\in \R$. Then,
	$$\frac{\|A\|_2^2+3|tr(A^2)|}{4}\leq\|Re(e^{i\theta}A)\|_2^2\leq \|A\|_2^2$$
	and therefore
	\begin{eqnarray}\label{bounds for w2 new}
	\frac{\sqrt{ \|A\|_2^2+3|tr(A^2)|}}{2} \leq w_2(A)\leq \|A\|_2.
	\end{eqnarray}	
The lower bound in \eqref{bounds for w2 new} is new but it is not better than $\frac{1}{\sqrt{2}}\|A\|_2$, since for $a,b\geq 0$
$\frac{1}{\sqrt{2}}a\leq \frac{\sqrt{a^2+3b^2}}{2}$ implies $a\leq b.$
\end{remark}

%
%

In finite dimension, the attainment of the lower bound can be related with Birkhoff--James orthogonality, as it is shown in the following statement.
\begin{proposition}\label{equiv cota inf}
	Let $A\in \mathbb{M}_n$, then the following conditions are equivalent
	\begin{enumerate}
		\setlength\itemsep{.5em}
		\item $w_2(A)=\frac{1}{\sqrt{2}}\|A\|_2.$
		\item $I\perp^p A^2$ for $1\leq p<\infty$.
	\end{enumerate}
Moreover, if {\rm(1)} and {\rm(2)} are valid, then $ I  \perp^{|||\cdot|||}A^2$ for  any unitarily invariant norm $|||\cdot|||$.
\end{proposition}



Now, we focus in the general case when $p\in [1, \infty)$ and $p\neq 2.$ From Theorem 2 in \cite{Abu-kittaneh} follows that 
\begin{eqnarray}\label{cotabasica}
\frac12\|A\|_p\leq w_p(A)\leq \|A\|_p
\end{eqnarray}
 for any $1\leq p<\infty$ and $A\in \mathcal{B}_p(H)$.  Recall that if 
 	$1\leq p_1< p_2$ by (4) in Theorem \ref{bp prop}, this implies that 
 \begin{eqnarray}\label{cotap1p2}
w_{p_2}(T)\leq w_{p_1}(T).
\end{eqnarray}
 

We develop a number of inequalities to obtain new bounds
for the generalized numerical radius $w_p$  using the properties of the $p$-Schatten norms.

	\begin{proposition}\label{cotas1}
	Let $A\in\mathcal{B}_p(H)$, then 
	\begin{eqnarray}\label{p-equiv}
	2^{- \frac 1p}\|A\|_p\leq w_p(A) \leq \|A\|_p 
	\end{eqnarray}
	for $1\leq p\leq 2,$ and 
	\begin{eqnarray}
	2^{\frac 1p -1}\|A\|_p\leq w_p(A) \leq \|A\|_p
	\end{eqnarray}
 for $2\leq p<\infty.$
	\end{proposition}
	\begin{proof}
	We only prove the left hand side of \eqref{p-equiv}. By  Theorem 1 in \cite{BK}  we have that for any $\theta \in \R$ hold 
	\begin{eqnarray}
	2^p \|A\|_p\leq 2^p(\|Re(e^{i \theta}A)\|_p^p+\|Im(e^{i \theta}A)\|_p^p)\leq 2^{p+1}w_p^p(A).\nonumber
\end{eqnarray}
	\end{proof}
Observe that in both cases, the lower bounds found improve $\frac 12\|A\|_p$. Also, note that if $p=2$, we obtain Theorem 8 in \cite{Abu-kittaneh}.

\begin{proposition}
	Let $A\in\mathcal{B}_p(H)$,  with $1<p<\infty$,  such that $w_p(A)=\|A\|_p$ then $\|A^2\|_{p/2}=\|A\|^2_p. $
\end{proposition}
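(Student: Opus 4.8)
The plan is to reduce the hypothesis $w_p(A)=\|A\|_p$ to a rigid algebraic identity and then invoke the multiplicativity property of the Schatten norm recorded in Theorem \ref{bp prop}. First I would apply Corollary \ref{p-normaloid}, which is available precisely because $1<p<\infty$: the equality $w_p(A)=\|A\|_p$ is equivalent to condition (5) there, namely $A=\alpha A^*$ for some $\alpha\in\T$. This is the crucial structural input, since it linearizes the square of $A$.

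Next I would compute $A^2$ directly from this identity. Substituting the first factor gives
\begin{equation}
A^2=(\alpha A^*)A=\alpha\,A^*A,
\end{equation}
and since $A^*A=|A|^2\geq 0$ we obtain $|A^2|=|\alpha\,A^*A|=A^*A$. Because $|\alpha|=1$ and $\|\cdot\|_{p/2}$ is invariant under multiplication by a unimodular scalar, it follows that
\begin{equation}
\|A^2\|_{p/2}=\|\alpha\,A^*A\|_{p/2}=\|A^*A\|_{p/2}.
\end{equation}
Finally I would apply part (6) of Theorem \ref{bp prop}, which asserts $\|A\|_p^2=\|A^*A\|_{p/2}$, to conclude $\|A^2\|_{p/2}=\|A\|_p^2$, as desired.

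The argument is short once the right tool is identified, so there is no serious computational obstacle; the only subtlety worth flagging is the passage from $w_p(A)=\|A\|_p$ to the explicit relation $A=\alpha A^*$. This is exactly where the restriction $1<p<\infty$ is used, through the uniform convexity of $(\mathcal{B}_p(H),\|\cdot\|_p)$ underlying Corollary \ref{p-normaloid}; for $p=1$ the normaloid condition does not force $A=\alpha A^*$, which is consistent with the separate, degenerate behavior of $\omega_1$ noted earlier in this section. I would therefore present the proof in two lines, citing Corollary \ref{p-normaloid} for the identity $A=\alpha A^*$ and Theorem \ref{bp prop}(6) for the norm computation.
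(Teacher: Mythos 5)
Your proof is correct and follows exactly the route the paper intends: its own proof is the one-line statement that the result ``is a direct consequence of Corollary \ref{p-normaloid} and Theorem \ref{bp prop},'' and your argument---using Corollary \ref{p-normaloid} to get $A=\alpha A^*$, computing $A^2=\alpha A^*A$, and invoking Theorem \ref{bp prop}(6) for $\|A^*A\|_{p/2}=\|A\|_p^2$---is precisely the fleshed-out version of that. No gaps; the homogeneity of the Schatten (quasi-)norm under the unimodular factor $\alpha$ is handled correctly even when $p/2<1$.
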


\begin{proof}
	It is a direct consequence of Corollary \ref{p-normaloid} and Theorem \ref{bp prop}. 
\end{proof}

Let $A\in\mathcal{B}_p(H)$ with $p\geq 1$,  then for any $\theta \in \R$ we get $Im^2(e^{i\theta} A)=\frac12( A^*A+AA^*)-Re^2(e^{i\theta} A)\geq 0.$ It follows by Weyl's monotonicity principle that if $S, T \in  \mathcal{K}(H)$ are positive and $S\leq T$, then $s_j(S)\leq s_j(T)$ for any $j\in \mathbb{N}$, so $\|Re^2(e^{i\theta} A)\|_{p/2}\leq \frac 12\| A^*A+AA^*\|_{p/2}.$ This shows that
	\begin{eqnarray}\label{upperkittaneh}  w_p^2(A)\leq  \frac 12 \| A^*A+AA^*\|_{p/2}
	\end{eqnarray}

To see that inequality \eqref{upperkittaneh} improves inequality \eqref{cotabasica} for $p\geq 2$, consider the chain of inequalities 
	$$
	w_{p}^2(A)\leq \frac12 \|A^*A+AA^*\|_{p/2}\leq \frac12\|A^*A\|_{p/2}+ \frac12 \|AA^*\|_{p/2}\leq \|A\|_p^2.
	$$

In the next theorem, we give a lower and upper bound for the generalized numerical radius $w_p$ for $p\geq 2$.

\begin{theorem}\label{cotas}
Let $A\in\mathcal{B}_p(H)$ with $p\geq 2$, then 
\begin{eqnarray}\label{upper-lower} \sup\limits_{\theta\in \R} \frac{\left|\: \|Re^2(e^{i\theta}A)\|_{p/2}- \| Im^2(e^{i\theta}A)\|_{p/2}\right|}{2}\leq  w_p^2(A)-  \frac 14 \| A^*A+AA^*\|_{p/2}\leq \frac12 w_{p/2}(A^2)
\end{eqnarray}
\end{theorem}
\begin{proof}
First, observe that mimicking the idea in \cite{ZMXF}, Theorem 2.3 we obtain for any $\theta\in \mathbb{R}$
\begin{eqnarray}
w_p^2(A)&\geq& \max\{\|Re(e^{i\theta}A)\|_p^2, \| Im^2(e^{i\theta}A)\|_p^2\}=\max\{\|Re^2(e^{i\theta}A)\|_{p/2}, \| Im^2(e^{i\theta}A)\|_{p/2}\}\nonumber\\
&=&\frac{\|Re^2(e^{i\theta}A)\|_{p/2}+\|Im^2(e^{i\theta}A)\|_{p/2}
}{2}+\frac{\left|\: \|Re^2(e^{i\theta}A)\|_{p/2}- \| Im^2(e^{i\theta}A)\|_{p/2}\right|}{2}\nonumber\\
&\geq&\frac 12\| Re^2(e^{i\theta}A)+Im^2(e^{i\theta}A)\|_{p/2} +\frac{\left|\: \|Re^2(e^{i\theta}A)\|_{p/2}- \| Im^2(e^{i\theta}A)\|_{p/2}\right|}{2}\nonumber\\ 
&=&\frac 14\| A^*A+AA^*\|_{p/2} +\frac{\left|\: \|Re^2(e^{i\theta}A)\|_{p/2}- \| Im^2(e^{i\theta}A)\|_{p/2}\right|}{2}. \nonumber\
\end{eqnarray}
Hence
\begin{eqnarray}
\frac 14\| A^*A+AA^*\|_{p/2} + \sup\limits_{\theta\in \R} \frac{\left|\: \|Re^2(e^{i\theta}A)\|_{p/2}- \| Im^2(e^{i\theta}A)\|_{p/2}\right|}{2}\leq w_p^2(A).
\end{eqnarray}
To prove the second inequality in \eqref{upper-lower}, we have 
\begin{eqnarray}\label{upperbound}
w_p^2(A)&=&\sup\limits_{\theta\in \R}\|Re(e^{i\theta}A)\|_p^2=\sup\limits_{\theta\in \R}\frac14 \|e^{i\theta}A+e^{-i\theta}A^*\|_p^2=\sup\limits_{\theta\in \R}\frac14 \|(e^{i\theta}A+e^{-i\theta}A^*)^2\|_{p/2}\nonumber\\
&=&\sup\limits_{\theta\in \R}\frac14 \| A^*A+AA^*+2Re(e^{2i\theta}A^2)\|_{p/2}\nonumber\\
&\leq& \frac 14 \| A^*A+AA^*\|_{p/2} +\frac 12\sup\limits_{\theta\in \R} \|Re(e^{2i\theta}A^2)\|_{p/2}\nonumber\\
&=& \frac 14 \| A^*A+AA^*\|_{p/2} +\frac12 w_{p/2}(A^2)
\end{eqnarray}
\end{proof}

\begin{remark}\begin{enumerate}
		\item 
The inequalities in Theorem \ref{cotas}
can be extended to $Q$-norms. For
more examples of these norms, the reader is referred to \cite{BK90}.

	\item  To see that left inequality in \eqref{upper-lower} improves Theorem 2.3 in  \cite{ZMXF},  for $p$-Schatten  norms with $p\geq 2$ we consider property \eqref{cotap1p2}.

\item Let $A\in\mathcal{B}_p(H)$ with $p\geq 2$. Considering the polar decomposition of $A$ and Theorem 1  in \cite{BK902} we have 
	\begin{eqnarray}
	s_j(A^2)=s_j(U|A|U|A|)\leq  \frac 12 s_j(A^*A+AA^*).
	\end{eqnarray}
	Thus
\begin{eqnarray}
w_{p/2}(A^2)&\leq& \|A^2\|_{p/2}=\left(\sum _{j=1}^{\infty}s_j(A^2)^{p/2}\right)^{2/p}\leq \frac 12 \left(\sum _{j=1}^{\infty}s_j(A^*A+AA^*)^{p/2}\right)^{2/p}\nonumber\\&\leq&
\frac 12 \|A^*A+AA^*\|_{p/2},
\end{eqnarray} 
and hence Theorem \ref{cotas} refines inequality \eqref{upperkittaneh}.
	\end{enumerate}
\end{remark}
For $0<p<1$, instead
	of the triangle inequality, which does not hold for the two-sided ideal $ \mathcal{B}_p(H)$, we have $\|A+B\|_p^p\leq \|A\|_p^p+\|B\|_p^p$ for $A, B\in  \mathcal{B}_p(H)$.
Utilizing a similar argument as in Theorem \ref{cotas} we get the following statement.
\begin{proposition}
	Let $A\in\mathcal{B}_p(H)$ with $1\leq p< 2$, then 
	\begin{eqnarray}\label{upper-lower-2}
	\frac {1}{2^{p/2+1}}\| A^*A+AA^*\|_{p/2}^{p/2}&+& \sup\limits_{\theta\in \R} \frac{\left|\: \|Re^2(e^{i\theta}A)\|_{p/2}^{p/2}- \| Im^2(e^{i\theta}A)\|_{p/2}^{p/2}\right|}{2}\leq w_p^p(A)\nonumber\\
	&\leq&\frac{1}{2^p} \| A^*A+AA^*\|_{p/2}^{p/2} +\frac {1}{2^{p/2}} w_{p/2}^{p/2}(A^2).
	\end{eqnarray}
	\end{proposition}
	\begin{proof}
First, observe that mimicking the idea in \cite{ZMXF}, Theorem 2.3 we obtain for any $\theta\in \mathbb{R}$
\begin{eqnarray}
w_p^p(A)&\geq& \max\{\|Re(e^{i\theta}A)\|_p^p, \| Im(e^{i\theta}A)\|_p^p\}=\max\{\|Re^2(e^{i\theta}A)\|_{p/2}^{p/2}, \| Im^2(e^{i\theta}A)\|_{p/2}^{p/2}\}\nonumber\\
&=&\frac{\|Re^2(e^{i\theta}A)\|_{p/2}^{p/2}+\|Im^2(e^{i\theta}A)\|_{p/2}^{p/2}
}{2}+\frac{\left|\: \|Re^2(e^{i\theta}A)\|_{p/2}^{p/2}- \| Im^2(e^{i\theta}A)\|_{p/2}^{p/2}\right|}{2}\nonumber\\
&\geq&\frac 12\| Re^2(e^{i\theta}A)+Im^2(e^{i\theta}A)\|_{p/2} ^{p/2}+\frac{\left|\: \|Re^2(e^{i\theta}A)\|_{p/2}^{p/2}- \| Im^2(e^{i\theta}A)\|_{p/2}^{p/2}\right|}{2}\nonumber\\ 
&=&\frac {1}{2^{p/2+1}}\| A^*A+AA^*\|_{p/2}^{p/2} +\frac{\left|\: \|Re^2(e^{i\theta}A)\|_{p/2}^{p/2}- \| Im^2(e^{i\theta}A)\|_{p/2}^{p/2}\right|}{2}. \nonumber\
\end{eqnarray}
Hence
\begin{eqnarray}
\frac {1}{2^{p/2+1}}\| A^*A+AA^*\|_{p/2}^{p/2}+ \sup\limits_{\theta\in \R} \frac{\left|\: \|Re^2(e^{i\theta}A)\|_{p/2}^{p/2}- \| Im^2(e^{i\theta}A)\|_{p/2}^{p/2}\right|}{2}\leq w_p^p(A).
\end{eqnarray}
To prove the second inequality in \eqref{upper-lower-2}, we have 
\begin{eqnarray*}
w_p^p(A)&=&\sup\limits_{\theta\in \R}\|Re(e^{i\theta}A)\|_p^p=\sup\limits_{\theta\in \R}\frac{1}{2^p} \|e^{i\theta}A+e^{-i\theta}A^*\|_p^p=\sup\limits_{\theta\in \R}\frac{1}{2^p}  \|(e^{i\theta}A+e^{-i\theta}A^*)^2\|_{p/2}^{p/2}\nonumber\\
&=&\sup\limits_{\theta\in \R}\frac{1}{2^p} \| A^*A+AA^*+2Re(e^{2i\theta}A^2)\|_{p/2}^{p/2}\nonumber\\
&\leq&\frac{1}{2^p} \| A^*A+AA^*\|_{p/2} ^{p/2}+\frac {1}{2^{p/2}}\sup\limits_{\theta\in \R} \|Re(e^{2i\theta}A^2)\|_{p/2}^{p/2}\nonumber\\
&=&\frac{1}{2^p} \| A^*A+AA^*\|_{p/2}^{p/2} +\frac {1}{2^{p/2}} w_{p/2}^{p/2}(A^2)
\end{eqnarray*}
\end{proof}

Since $(\mathcal{B}_p(H), \|.\|_p)$ is a uniformly convex space for $1<p<\infty$, we used the characterization of the norm-parallelism in $p$-Schatten ideals  obtained in \cite{BCMWZ} (using the notion of semi-inner product in the sense of Lumer) and Theorem \ref{cotasuperior} to obtain the following statement  for  $w_p$. 

\begin{proposition}\label{p-normaloid}
	Let $A\in\mathcal{B}_p(H)$ with polar decompositions $A=U|A|$ and $A^*=V|A^*|$, respectively. If $1<p<\infty$, then the following conditions are equivalent:
	\begin{enumerate}
		\setlength\itemsep{.5em}
		\item $w_p(A)=\|A\|_p$.
		\item$A{\parallel}^p A^*$.
		\item $\big|{\rm tr}(|A|^{p-1}U^*A^*)\big| = \|A\|_p^p$.
		\item $\big|{\rm tr}(|A^*|^{p-1}V^*A)\big| = \|A^*\|_p^p$.
		\item$A=\alpha A^*$ with $\alpha \in \T$.
	\end{enumerate}
\end{proposition}

As a consequence of the preceding results and a combination of the different characterizations of norm parallelism for trace-class operators (in a context of a finite or infinite dimensional space) we obtain the following statement. 
\begin{corollary}
	Let $A\in\mathcal{B}_1(H)$,  then the following conditions are equivalent:
	
	\begin{enumerate}
		\setlength\itemsep{.5em}
		\item $w_1(A)=\|A\|_1$.
		\item There exists $\theta \in \R$ such that $\|A+e^{i\theta}A^*\|_1=\|A\|_1+\|A^*\|_1$  (i.e. $A{\parallel}^1 A^*$).
		
		\item There exist a partial isometry $V$ and $\lambda\in\mathbb{T}$ such that $A = V|A|$ and $A^*= \lambda V|A^*|.$
		\item There exists $\lambda\in\mathbb{T}$ such that
		$$\Big|{\rm tr}(|A|) + \lambda\, {\rm tr}(U^*A^*)\Big|\leq\Big{\|P_{\ker A^*}(A + \lambda A^*)P_{\ker A}\Big\|}_1,$$
		where $A = U|A|$ is the polar decomposition of $A$. 
		\item  $(A^*)^2=\lambda |A| |A^*|$
		\item There exist isometries $V, W$ and $\lambda\in\mathbb{T}$  such that 
		$$|A+\lambda A^*|=V|A|V^*+W|A^*|W^*.$$
		\item There exists $\lambda\in\mathbb{T}$ such that $| A+\lambda A^*|=|A|+|A^*|.$ (i.e. absolute value parallel definition in \cite{Z}).
		\item There exist a closed subspace $\mathcal{M}$ of $\cH$ such that  $$\|A\|_1=tr(P_{\mathcal{M}}e^{-i\frac{\theta}{2}}AP_{\mathcal{M}})-tr(P_{\mathcal{M}^{\perp}}e^{-i\frac{\theta}{2}}AP_{\mathcal{M}^{\perp}}),$$
		with $P_{\mathcal{M}}e^{-i\frac{\theta}{2}}AP_{\mathcal{M}}\geq 0$ and $P_{\mathcal{M}^{\perp}}e^{-i\frac{\theta}{2}}AP_{\mathcal{M}^{\perp}}\leq 0\ $.
	\end{enumerate}
	If $\cH$ is finite dimensional then (1) to (8) are also equivalent to
	\begin{enumerate}
		\setcounter{enumi}{8}
		\item There exits $F\in \mathbb{M}_n$ such that $\|F\|\leq 1$, $tr(AF^*)=\|A\|_1=|tr(A^*F^*)|.$
	\end{enumerate}
	Furthermore if $A$ is invertible, all the previous conditions are equivalent to 
	\begin{enumerate}
		\setcounter{enumi}{9}
		\item$\Big|{\rm tr}\big(|A|A^{-1}A^*\big)\Big| = {\|A^*\|}_1$.
	\end{enumerate}
	\begin{proof}
		It is a direct consequence of Theorem \ref{cotasuperior}, Theorem 2.15 and Corollary 2.16 in \cite{Z}, Theorem 2.3 and 2.6 in \cite{li-li} and Theorem 3.3 in \cite{Li-Sch}.
	\end{proof}
\end{corollary}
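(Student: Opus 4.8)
The plan is to reduce the whole chain of equivalences to the single relation of trace-norm parallelism $A{\parallel}^1 A^*$, and then read off conditions (3)--(10) as the known characterizations of that relation applied to the pair $(A, A^*)$.

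First I would dispose of (1) $\Leftrightarrow$ (2). By Theorem \ref{bp prop}(5) the trace norm is selfadjoint, so $\|A\|_1 = \|A^*\|_1$ and the upper bound in Lemma \ref{basic prop}(3) equals $\frac12(\|A\|_1 + \|A^*\|_1) = \|A\|_1$. Applying Theorem \ref{cotasuperior} with $N(\cdot) = \|\cdot\|_1$ then gives immediately that $w_1(A) = \|A\|_1$ holds if and only if $A{\parallel}^1 A^*$, which is exactly condition (2).

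Next, since (2) is now the hypothesis of each cited parallelism theorem, I would substitute the second operator $B = A^*$ (with polar decomposition $A^* = V|A^*|$) into those statements. Concretely, Theorem 2.15 and Corollary 2.16 of \cite{Z} supply the partial-isometry factorization (3), the kernel-projection trace inequality (4), the factorization of the modulus (6), the absolute-value parallelism (7), and the subspace/positivity decomposition (8); Theorems 2.3 and 2.6 of \cite{li-li} cover the remaining clauses together with condition (5); for $\dim \cH < \infty$, Theorem 3.3 of \cite{Li-Sch} yields the contraction characterization (9); and the invertible case (10) follows by inserting $A^{-1}$ into one of the trace identities to clear the modulus factor.

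The step requiring the most care is checking that each \emph{general} characterization of $A{\parallel}^1 B$ collapses to the stated form once $B = A^*$. This is essentially bookkeeping with the two polar decompositions: from $A = U|A|$ one has $A^* = |A|U^*$ and $|A^*| = U|A|U^*$, so the partial isometry $V$ of $A^*$ coincides with $U^*$ on the relevant ranges. Propagating these identities is what turns a generic relation such as $A^*B = \lambda|A||B|$ into condition (5), namely $(A^*)^2 = \lambda|A||A^*|$, and what reduces the general trace expression of \cite{Li-Sch} to the single clause $|{\rm tr}(A^*F^*)| = \|A\|_1$ appearing in (9). Once these substitutions are performed, the equivalences are immediate from the cited results.
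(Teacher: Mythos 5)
Your proposal is correct and takes essentially the same route as the paper: the paper's own proof consists precisely of invoking Theorem \ref{cotasuperior} (with the selfadjoint norm $\|\cdot\|_1$) to get (1) $\Leftrightarrow$ (2), and then citing Theorem 2.15 and Corollary 2.16 of \cite{Z}, Theorems 2.3 and 2.6 of \cite{li-li}, and Theorem 3.3 of \cite{Li-Sch} specialized to the pair $(A, A^*)$ for conditions (3)--(10). Your added bookkeeping with the polar decompositions ($A^* = |A|U^*$, $|A^*| = U|A|U^*$) makes explicit the specialization step that the paper leaves implicit, but the argument is the same.
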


\section{Inequalities involving $w_N$ for products of operators}

The following general result for the product of two operators holds, as a simple consequence of the upper bound of $\wn$ with $N(\cdot)$ a selfadjoint norm on $\bh$: if $A, X\in \bh$ then 
\begin{equation}\label{ineq_producto}
\wn(AX)\leq N(AX)\leq N(A)N(X)\leq 4\wn(A)\wn(X).\end{equation}
New estimates for the generalized numerical radius, $\wn$ with $N(\cdot)$ a selfadjoint norm,  of the product $AX$ are given in this section. 
We begin with the following definitions which are necessaries. Let $A, T\in \bh$, the vector-function $A-\lambda T$ is known as the pencil generated by $A$ and $T$. Evidently there is at least one complex number $\lambda_0$ such that
$$
\|A-\lambda_0T\|=\inf_{\lambda\in \mathbb C} \|A-\lambda T\|.
$$
The number $\lambda_0$ is unique if $0\notin \sigma_{ap}(T)$ (or equivalently if $\inf\{\|Tx\|: \|x\|=1\}>0$). The proofs of existence and unicity of such $\lambda_0$ are detailed in \cite{paul}. Different authors, following \cite{stampfli}, called to this unique number as center of mass of $A$ respect to $T$ and we denote by $c(A, T)$ and when $T=I$ we write $c(A)$ and 
$D_A=\|A-c(A)I\|=\inf_{\lambda\in \mathbb C} \|A-\lambda I\|.$

Similarly for a norm $N(\cdot)$ we define
$$
D_{N, A}=\inf_{\lambda\in \mathbb C} N(A-\lambda I).
$$
With a similar proof as Theorem 1 in \cite{paul} we can state that $D_{N, A}=N(A-\lambda_0 I)$ for some $\lambda_0\in \mathbb{C}$.
If $A, X$ are two bounded linear operators on $\cH$, then in \cite{Abu-kittaneh2} the authors proved 
\begin{eqnarray}\label{ref_omar_kittaneh}
w(AX)\leq (\|A\|+D_A)w(X).
\end{eqnarray}
Since $D_A\leq \|A\|$, \eqref{ref_omar_kittaneh} is a refinement of the classical inequality $w(AX)\leq 2\|A\|w(X)$.

The following result is similar to \eqref{ref_omar_kittaneh} for the generalized numerical radius $\wn(\cdot)$.  Here we give a proof for the convenience of the reader. 

\begin{theorem} For any $A, X\in \bh$ and $N(\cdot)$ a selfadjoint and submultiplicative norm on $\bh$ hold
\begin{eqnarray}
w_N(AX)&\leq&\min \left\lbrace \frac12 \wn(AX\mp XA^*)+\frac12 \wn(AX\pm XA^*)\right\rbrace  \nonumber\\
&\leq&\min \left\lbrace N(A)\wn(X)+\frac12 \wn(AX\pm XA^*)\right\rbrace  \nonumber\\&\leq& (N(A)+D_{N, A}) \wn(X)\leq 2N(A)\wn(X)\leq 4\wn(A)\wn(X).
\end{eqnarray}
\end{theorem}
\begin{proof}
To prove the first and second inequality we observe that $AX=\frac 12(AX\pm XA^*)+\frac 12(AX\mp XA^*)$ and we use the inequality (4) in Lemma \ref{basic prop}.
Now, we consider $\lambda_0$ such that $N(A-\lambda_0 I)=D_{N, A}$ and we write $\lambda_0=e^{i\theta_0}|\lambda_0|$, then by the previous inequality we have 
\begin{eqnarray}
w_N(AX)&=&w_N(e^{-i\theta_0}AX)\leq N(e^{-i\theta_0}A) w_N(X)+\frac12 w_N(e^{-i\theta_0}AX-e^{i\theta_0}XA^*)\nonumber \\
&=&N(e^{-i\theta_0}A) w_N(X)+\frac12 w_N(e^{-i\theta_0}(A-\lambda_0I)X-e^{i\theta_0}X(A-\lambda_0 I)^*)\nonumber\\
&\leq&N(A) w_N(X)+ N(e^{-i\theta_0}(A-\lambda_0 I))w_N(X)=(N(A)+D_{N, A})w_N(X).\nonumber \
\end{eqnarray}
The fourth and fifth inequalities follows from the well-known facts: $D_{N, A}\leq N(A)$ and $N(A)\leq 2\wn(A)$.
\end{proof}

	\begin{corollary}
		Let $A, X\in \bh$, $X\neq 0$  and $N(\cdot)$ a selfadjoint and submultiplicative norm  on $\bh$. If $\wn(AX)=2N(A)\wn(X)$ or $\wn(AX)=4\wn(A)\wn(X)$then $A\perp_N  I$.
\end{corollary}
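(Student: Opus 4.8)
The plan is to read the conclusion directly off the chain of inequalities established in the preceding theorem, by a squeezing argument. Since $N(\cdot)$ is selfadjoint, $\wn(\cdot)$ is a norm on $\bh$ equivalent to $N(\cdot)$; in particular the hypothesis $X\neq 0$ forces $\wn(X)>0$, and this strict positivity is exactly what will allow me to cancel a common factor of $\wn(X)$ at the decisive moment.

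First I would record the chain furnished by the theorem,
$$\wn(AX)\leq (N(A)+D_{N,A})\wn(X)\leq 2N(A)\wn(X)\leq 4\wn(A)\wn(X),$$
and observe that each of the two hypotheses, $\wn(AX)=2N(A)\wn(X)$ and $\wn(AX)=4\wn(A)\wn(X)$, asserts that the leftmost term equals one of the terms lying to its right. In either case every inequality standing between those two terms must collapse into an equality; in particular the first interior inequality becomes $(N(A)+D_{N,A})\wn(X)=2N(A)\wn(X)$. Cancelling $\wn(X)>0$ yields $N(A)+D_{N,A}=2N(A)$, that is $D_{N,A}=N(A)$.

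It then remains to translate $D_{N,A}=N(A)$ into the orthogonality relation $A\perp_N I$. By definition $D_{N,A}=\inf_{\lambda\in\mathbb{C}}N(A-\lambda I)\leq N(A)$, so the equality $D_{N,A}=N(A)$ says precisely that $N(A)\leq N(A-\lambda I)$ for every $\lambda\in\mathbb{C}$. Rewriting $\lambda$ as $-\gamma$, this is exactly the Birkhoff--James condition $N(A)\leq N(A+\gamma I)$ for all $\gamma\in\mathbb{C}$, i.e. $A\perp_N I$.

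The argument carries no genuine obstacle; it is a pure squeeze. The only two points demanding care are the cancellation of $\wn(X)$, legitimate precisely because $X\neq 0$ and $N(\cdot)$ is selfadjoint, and the final identification $D_{N,A}=N(A)\Leftrightarrow A\perp_N I$, whose entire content is that the trivial bound $D_{N,A}\leq N(A)$ converts the equality into the defining inequality of orthogonality to the identity.
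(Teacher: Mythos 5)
Your proof is correct and is exactly the argument the paper intends: the corollary is stated without proof as an immediate consequence of the preceding theorem, and the natural reading is precisely your squeeze of the chain $\wn(AX)\leq (N(A)+D_{N,A})\wn(X)\leq 2N(A)\wn(X)\leq 4\wn(A)\wn(X)$, cancellation of $\wn(X)>0$, and the observation that $D_{N,A}=N(A)$ is the Birkhoff--James condition $A\perp_N I$. Both delicate points you flag (positivity of $\wn(X)$ and the equivalence $D_{N,A}=N(A)\Leftrightarrow A\perp_N I$) are handled correctly.
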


In order to obtain more  refinements for the case $w_N=w_p$, we need the following results obtained by Bhatia and Zhan in \cite{BZ1, BZ2}.
\begin{lemma}\label{bhatiazhan}
Let $T = A + iB \in \mathcal{B}_p(H)$  with $2\leq p<\infty$. If
$A $ is positive and $B$ is  selfadjoint then
\begin{equation}
\|T\|_p^2\leq \|A\|_p^2 + 2^{1-2/p} \|B\|_p^2.
\end{equation}
Also when both $A$ and $B$ are positive then 
\begin{equation}
\|T\|_p^2\leq \|A\|_p^2 +  \|B\|_p^2.
\end{equation}
\end{lemma}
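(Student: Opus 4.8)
The final item is Lemma~\ref{bhatiazhan}, attributed to Bhatia and Zhan: for $T = A + iB \in \mathcal{B}_p(H)$ with $2 \leq p < \infty$, if $A$ is positive and $B$ selfadjoint then $\|T\|_p^2 \leq \|A\|_p^2 + 2^{1-2/p}\|B\|_p^2$, and if both $A, B$ are positive then $\|T\|_p^2 \leq \|A\|_p^2 + \|B\|_p^2$.

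\bigskip

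The plan is to reduce both inequalities to Clarkson-type estimates for the Schatten $p$-norm, exploiting the Cartesian structure $T = A + iB$ together with its conjugate $T^* = A - iB$ (here using that $A, B$ are selfadjoint, so $A^* = A$ and $B^* = B$). First I would observe the two algebraic identities $T + T^* = 2A$ and $T - T^* = 2iB$, which give $\|T + T^*\|_p = 2\|A\|_p$ and $\|T - T^*\|_p = 2\|B\|_p$ (the latter since $\|iB\|_p = \|B\|_p$). The McCarthy--Clarkson inequalities, already invoked in the paper (see \cite{mccarthy}, Th.~2.7), relate $\|T + T^*\|_p^q + \|T - T^*\|_p^q$ to $\|T\|_p$ and $\|T^*\|_p = \|T\|_p$; the idea is to pick the Clarkson inequality with the right exponent so that after raising to a power and applying convexity one isolates $\|T\|_p^2$ on the left and $\|A\|_p^2, \|B\|_p^2$ on the right with the stated constants.

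\bigskip

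For the first inequality the key input is the operator analogue of the parallelogram-type bound valid in the range $2 \leq p < \infty$, namely $\|X+Y\|_p^2 + \|X-Y\|_p^2 \leq 2^{2-2/p}\left(\|X\|_p^2 + \|Y\|_p^2\right)$ applied with suitable $X, Y$, or rather its dual form. Setting $X = A$, $Y = iB$ and running the estimate backwards, I would extract $\|A + iB\|_p^2 \leq \|A\|_p^2 + 2^{1-2/p}\|B\|_p^2$; the asymmetry in the constant (the factor $2^{1-2/p}$ on the $B$-term but $1$ on the $A$-term) is exactly what signals that the positivity of $A$ is being used to sharpen one half of the symmetric Clarkson bound. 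The positivity of $A$ enters through a comparison of the singular values of $A + iB$ with those of the Hermitian part, allowing the $A$-contribution to be controlled without the extra Clarkson factor.

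\bigskip

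For the second inequality, when both $A$ and $B$ are positive, the improvement to constant $1$ on both terms comes from a stronger pinching or antidiagonal argument: if $A, B \geq 0$ then one can dominate $|A + iB|^2 = A^2 + B^2 + i(AB - BA)$ and, after taking traces of the $p/2$-power (using Theorem~\ref{bp prop}(6), $\|T\|_p^2 = \|T^*T\|_{p/2}$), the cross terms $i(AB-BA)$ contribute nothing to the trace of the positive part in the relevant majorization, leaving the clean Pythagorean bound $\|T\|_p^2 \leq \|A\|_p^2 + \|B\|_p^2$. The main obstacle I anticipate is justifying the passage from the trace identity for $\|T\|_p^2 = \||T|^2\|_{p/2}$ to the singular-value majorization that kills the cross term: this requires either a weak-majorization result of Bhatia--Zhan relating the singular values of $A + iB$ to those of $A$ and $B$ for positive $A, B$, or a direct convexity argument on the trace functional $\mathrm{tr}(\cdot)^{p/2}$. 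Since the lemma is quoted verbatim from \cite{BZ1, BZ2}, in practice I would cite those papers for the majorization step rather than reprove it, and confine the argument here to the reduction via the Cartesian decomposition and Theorem~\ref{bp prop}(6).
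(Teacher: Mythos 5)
The paper never proves this lemma: it is imported verbatim from Bhatia and Zhan, and the citation to \cite{BZ1, BZ2} is the entire justification. So the fallback in your final sentence --- cite those papers and keep only the Cartesian-decomposition bookkeeping --- coincides exactly with the paper's own treatment, and it is the only part of your proposal that works. The difficulty is that the mathematics you sketch as a would-be proof contains, in each half, a step that is not merely unproved but false, so neither sketch could be completed into an argument.

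For the first inequality, the Clarkson--McCarthy route cannot work even in principle: those inequalities are symmetric in $X$ and $Y$ and make no use of positivity, whereas positivity of $A$ is essential to the statement. Take $A=\left(\begin{smallmatrix}1&0\\0&-1\end{smallmatrix}\right)$ (selfadjoint, not positive) and $B=\left(\begin{smallmatrix}0&1\\1&0\end{smallmatrix}\right)$; then $T=A+iB$ has singular values $2$ and $0$, so $\|T\|_p^2=4$, while $\|A\|_p^2+2^{1-2/p}\|B\|_p^2=2^{2/p}+2<4$ for every $p>2$. Hence no positivity-free, symmetric inequality can imply the bound, and the place where you say positivity enters (``a comparison of the singular values of $A+iB$ with those of the Hermitian part'') is not a detail to be filled in later --- it is the whole content of Bhatia--Zhan's theorem, and it is precisely what your sketch leaves blank. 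For the second inequality, the claim that the cross term $i(AB-BA)$ ``contributes nothing'' is false, and in the worst way: since $T^*T+TT^*=2(A^2+B^2)$, the triangle inequality in $\mathcal{B}_{p/2}(H)$ (valid because $p/2\ge 1$) together with $\|T^*T\|_{p/2}=\|TT^*\|_{p/2}=\|T\|_p^2$ gives $\|A^2+B^2\|_{p/2}\le\|T\|_p^2$ for \emph{all} selfadjoint $A,B$, so deleting the commutator can only lower the norm, and strictly so in general: for the positive matrices $A=\left(\begin{smallmatrix}1&1\\1&1\end{smallmatrix}\right)$, $B=\left(\begin{smallmatrix}1&0\\0&0\end{smallmatrix}\right)$ the largest eigenvalue of $T^*T$ is $(5+\sqrt{21})/2$ while that of $A^2+B^2$ is $(5+\sqrt{17})/2$, whence $\|T\|_p^2>\|A^2+B^2\|_{p/2}$ for all large $p$. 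So the intermediate bound $\|T\|_p^2\le\|A^2+B^2\|_{p/2}\le\|A\|_p^2+\|B\|_p^2$ that your ``drop the commutator, then triangle inequality'' argument requires fails at its first step; the genuine proofs in \cite{BZ1, BZ2} proceed along different lines. Keep the citation, discard both sketches.
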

Recall that an operator $A\in \bh$ is called accretive if $Re(A)\geq 0$ and analogously $A$ is dissipative if $Im(A)\geq 0$. Using these definitions, we have the following statement.
\begin{theorem}
Let $A\in\bh$,  $X\in \mathcal{B}_p(H)$ with  $2\leq p<\infty$  and $X$ accretive, then
\begin{equation}
w_p(AX)\leq  \sqrt{1+2^{1-2/p}} \|A\|  w_p(X).
\end{equation}
Moreover, if $X$ accretive and dissipative then 
\begin{equation}
w_p(AX)\leq \sqrt{2}\|A\| w_p(X).
\end{equation}
\end{theorem}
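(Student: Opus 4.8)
The plan is to bound $w_p(AX)$ by first relating the generalized numerical radius to the real part of a rotated operator and then applying the Bhatia--Zhan inequality of Lemma~\ref{bhatiazhan} to control the real part of $e^{i\theta}AX$. First I would fix $\theta\in\R$ and examine $N(Re(e^{i\theta}AX)) = \|Re(e^{i\theta}AX)\|_p$, where the idea is to write the accretive operator $X$ in its Cartesian form $X = Re(X) + i\,Im(X)$ with $Re(X)\geq 0$, and to factor the product $AX$ so that $A$ acts as a bounded multiplier. Using item (5) of Theorem~\ref{bp prop}, namely $\|TS\|_p\leq \|T\|\,\|S\|_p$, the operator $A$ contributes only a factor of $\|A\|$, so the essential task reduces to estimating the Schatten $p$-norm of the real part of $e^{i\theta}X$ together with its imaginary part.

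The key step is to recognize $Re(e^{i\theta}X)$ and $Im(e^{i\theta}X)$ as the selfadjoint components of the operator $e^{i\theta}X$, and to apply Lemma~\ref{bhatiazhan} with the decomposition $e^{i\theta}AX$ viewed through a suitable $A' + iB'$. The cleanest route is to first establish the scalar-type estimate
\begin{equation}
\|Re(e^{i\theta}X)\|_p^2 + \|Im(e^{i\theta}X)\|_p^2 \leq \bigl(1 + 2^{1-2/p}\bigr)\,w_p(X)^2,
\end{equation}
which follows because $X$ accretive means one of the two selfadjoint parts (namely $Re(X)$) is positive, so that $X = Re(X) + i\,Im(X)$ fits the hypotheses of the first inequality in Lemma~\ref{bhatiazhan} after absorbing the rotation, and because each term $\|Re(e^{i\theta}X)\|_p$ and $\|Im(e^{i\theta}X)\|_p$ is bounded by $w_p(X)$ via the definition \eqref{wndef} together with item (1) of Lemma~\ref{basic prop}. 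Combining this with $\|Re(e^{i\theta}AX)\|_p\leq \|A\|\,\|X\|_p$-type control and taking the supremum over $\theta$ yields the factor $\sqrt{1+2^{1-2/p}}\,\|A\|\,w_p(X)$.

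For the moreover part, when $X$ is both accretive and dissipative, both $Re(X)\geq 0$ and $Im(X)\geq 0$ hold, so the second inequality of Lemma~\ref{bhatiazhan} applies and the constant $2^{1-2/p}$ is replaced by $1$, giving $\sqrt{2}\,\|A\|\,w_p(X)$. I would handle this case in parallel, merely invoking the sharper bound $\|T\|_p^2\leq \|A\|_p^2 + \|B\|_p^2$ in place of the first.

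The hard part will be correctly tracking how the rotation by $e^{i\theta}$ interacts with the accretivity hypothesis: accretivity of $X$ is stated only for $X$ itself, not for $e^{i\theta}X$, so $Re(e^{i\theta}X)$ need not be positive for general $\theta$. The delicate point is therefore to apply Lemma~\ref{bhatiazhan} to a fixed representative (for instance to $X$ at the optimal angle, or by observing that the decomposition $Re(e^{i\theta}X) + i\,Im(e^{i\theta}X)$ always has one positive summand coming from the hypothesis on $X$ only after the correct identification) rather than naively assuming positivity survives the rotation. Resolving this — pinning down exactly which selfadjoint part inherits positivity and at which stage the supremum over $\theta$ is taken — is where the argument must be made airtight; once that is settled, the remaining estimates are routine applications of submultiplicativity and the definition of $w_p$.
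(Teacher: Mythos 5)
Your proposal assembles the right ingredients (the bound $\|TS\|_p\le\|T\|\,\|S\|_p$, Lemma~\ref{bhatiazhan}, accretivity giving $Re(X)\ge 0$, and the bounds $\|Re(X)\|_p,\|Im(X)\|_p\le w_p(X)$), but the step you call the key step is not the inequality the proof needs, and the difficulty you flag at the end --- which you explicitly leave unresolved --- is exactly where your version breaks. Your displayed estimate
\[
\|Re(e^{i\theta}X)\|_p^2+\|Im(e^{i\theta}X)\|_p^2\le\bigl(1+2^{1-2/p}\bigr)w_p(X)^2
\]
does not involve $\|X\|_p$ at all; it is in fact trivially true for every $\theta$ (each summand on the left is at most $w_p(X)^2$ by the definition of $w_p$ and item (1) of Lemma~\ref{basic prop}, and $1+2^{1-2/p}\ge 2$ for $p\ge 2$), so it needs neither accretivity nor Lemma~\ref{bhatiazhan} --- and for the same reason it is useless: combining it with $\|Re(e^{i\theta}AX)\|_p\le\|A\|\,\|X\|_p$ still leaves you needing $\|X\|_p\le\sqrt{1+2^{1-2/p}}\,w_p(X)$, which is the actual content of the theorem's constant. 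Note also that Lemma~\ref{bhatiazhan} bounds $\|T\|_p^2$ \emph{from above} by a combination of the norms of its Cartesian parts; it cannot be invoked to prove your inequality, whose left-hand side is a sum of squares of the parts.

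The fix is to restructure the argument so that the rotation never touches $X$; this also dissolves your worry about $Re(e^{i\theta}X)$ losing positivity, which is a self-inflicted obstacle rather than a genuine one. Since $\|\cdot\|_p$ is selfadjoint, item (3) of Lemma~\ref{basic prop} gives $w_p(AX)\le\|AX\|_p\le\|A\|\,\|X\|_p$, and this single step disposes of the supremum over $\theta$ entirely. Now apply Lemma~\ref{bhatiazhan} exactly once, to $X$ itself, where $Re(X)\ge 0$ by hypothesis and no rotation appears:
\[
\|X\|_p^2\le\|Re(X)\|_p^2+2^{1-2/p}\|Im(X)\|_p^2\le\bigl(1+2^{1-2/p}\bigr)w_p(X)^2,
\]
the last inequality using $\|Re(X)\|_p\le w_p(X)$ (the $\theta=0$ term of the supremum defining $w_p$) and $\|Im(X)\|_p\le w_p(X)$ (item (1) of Lemma~\ref{basic prop} at $\theta=0$). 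Chaining the two displays gives $w_p(AX)\le\sqrt{1+2^{1-2/p}}\,\|A\|\,w_p(X)$, and the identical scheme with the second inequality of Lemma~\ref{bhatiazhan} gives the bound $\sqrt{2}\,\|A\|\,w_p(X)$ when $X$ is also dissipative. This is precisely the paper's proof; your plan differs only in trying to carry positivity through the rotation $e^{i\theta}$, which is both unnecessary and, as you yourself observe, invalid.
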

\begin{proof}
Since $\|\cdot\|_p$ is a selfadjoint norm, we recall that $w_p(AX)\leq \|AX\|_p \leq \|A\| \|X\|_p$.
Now, if $X$ is accretive then by Lemma \ref{bhatiazhan}, we have 
\begin{equation}
w_p(AX)\leq \|A\| \sqrt{\|Re(X)\|_p^2+2^{1-2/p}\|Im(X)\|_p^2}\leq \sqrt{1+2^{1-2/p}} \|A\| w_p(X).
\end{equation}
Furthermore, if $X$ is also dissipative then 
\begin{equation}
w_p(AX)\leq \|A\| \sqrt{\|Re(X)\|_p^2+\|Im(X)\|_p^2}\leq \sqrt{2} \|A\| w_p(X).
\end{equation}
\end{proof}

The following lemma plays a central role our following statements.
\begin{lemma}[\cite{kittaneh_jfa_1997}]Let $A,X\in \mathcal{B}(H)$ such that $AX$ is selfadjoint. If $XA$ belongs to the norm ideal associated with a unitarily invariant
norm $|||.|||$,  then $AX$ belongs to this ideal and 
$$|||AX|||\leq |||Re(XA)|||.$$
\end{lemma}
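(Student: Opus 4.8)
The plan is to reduce the inequality to a statement about eigenvalues and then invoke the pinching inequality for unitarily invariant norms. The starting point is the elementary fact that for any $A,B\in\bh$ the products $AB$ and $BA$ have exactly the same nonzero eigenvalues, counted with algebraic multiplicity. Since by hypothesis $AB$ is selfadjoint, all of these common nonzero eigenvalues are real; denote them, repeated according to multiplicity and arranged so that their moduli are nonincreasing, by $\lambda_1,\lambda_2,\dots$. First I would record that $BA$ lies in the norm ideal by assumption and is therefore compact, so $\lambda_j\to 0$ and $BA$ admits an upper triangular (Schur) form: there is an orthonormal basis $\{e_j\}$ of $H$ with respect to which $BA$ is represented by an upper triangular operator whose diagonal entries are precisely the eigenvalues $\lambda_j$.

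Next I would pass to the real part. In the Schur basis the strictly upper triangular part of $BA$ contributes nothing to the diagonal of $Re(BA)=\tfrac12\bigl(BA+(BA)^*\bigr)$, so the main diagonal of $Re(BA)$ is exactly the real sequence $(\lambda_j)_j$. Writing $D=\operatorname{diag}(\lambda_1,\lambda_2,\dots)$ for the corresponding diagonal operator, the diagonal pinching $X\mapsto \sum_j \langle X e_j,e_j\rangle\, e_j\otimes e_j$ carries $Re(BA)$ to $D$, and the pinching inequality for unitarily invariant norms gives
$$|||D|||=|||\operatorname{diag}(Re(BA))|||\leq |||Re(BA)|||.$$

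Finally I would match the two sides. Because $AB$ is selfadjoint, its singular values are the moduli of its eigenvalues, and these agree with $|\lambda_j|$ since $AB$ and $BA$ have the same nonzero spectrum; the diagonal operator $D$ has exactly the same singular values $|\lambda_j|$. As every unitarily invariant norm depends only on the singular value sequence, this yields $|||AB|||=|||D|||$, and combined with the pinching bound it gives $|||AB|||\le|||Re(BA)|||$. (The same circle of ideas also shows $AB$ genuinely belongs to the ideal: Weyl's majorization $|\lambda_j|\prec_w s_j(BA)$ forces $|||AB|||=\Phi(|\lambda_j|)\le \Phi(s_j(BA))=|||BA|||<\infty$, where $\Phi$ is the underlying symmetric gauge function.)

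The main obstacle is the infinite dimensional bookkeeping rather than the algebraic idea. One must justify the Schur triangularization and the diagonal pinching inequality in the compact, \emph{non}-selfadjoint setting, for which Ringrose's theory of triangular forms for compact operators is the appropriate tool, and one must verify that the algebraic multiplicities of the nonzero eigenvalues of $AB$ and $BA$ are correctly aligned with the geometric multiplicities of the selfadjoint operator $AB$, so that the identification $|||AB|||=|||D|||$ is an exact equality and not merely an inequality.
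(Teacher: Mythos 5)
The paper offers no proof of this lemma at all: it is quoted from the cited reference \cite{kittaneh_jfa_1997}, so your attempt can only be compared with the standard argument given there. In spirit it matches: Kittaneh's proof also rests on the coincidence of the nonzero eigenvalues of $AB$ and $BA$ together with a Fan-type majorization result (eigenvalues of the real part dominate the real parts of the eigenvalues), and your triangularization-plus-pinching step is precisely the proof of that majorization. So the strategy is the right one, but two specific steps need repair before the argument is complete.

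First, your assertion that a compact operator admits an upper triangular representation with respect to an orthonormal \emph{basis} of $H$, with the eigenvalues along the diagonal, is false in general: the Volterra operator is compact and quasinilpotent with no nontrivial finite-dimensional invariant subspace, so no orthonormal basis can triangularize it. The correct statement (Gohberg--Krein, or Ringrose, as you anticipate in your closing paragraph) is that Gram--Schmidt applied to a suitably ordered family of root vectors of the nonzero eigenvalues produces an orthonormal \emph{sequence} $\{e_j\}$ with $BAe_j\in\operatorname{span}\{e_1,\dots,e_j\}$ and $\langle BAe_j,e_j\rangle=\lambda_j$. This suffices: completing $\{e_j\}$ to an orthonormal basis, the full pinching of $Re(BA)$ is $D\oplus D'$ for some additional diagonal part $D'$, and $|||D|||\leq|||D\oplus D'|||\leq|||Re(BA)|||$ still holds because the singular values of $D$ are dominated termwise by those of $D\oplus D'$. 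Second, and this is the genuine gap, you invoke ``the singular values of $AB$ are the moduli of its eigenvalues,'' which presupposes that $AB$ is compact; a bounded selfadjoint operator need not have any eigenvalues, and without compactness neither the identity $|||AB|||=|||D|||$ nor the conclusion that $AB$ lies in the ideal even makes sense (your parenthetical Weyl-majorization argument for ideal membership silently assumes this same identification). Compactness must be proved, and it follows easily from the hypotheses: $(AB)^2=A(BA)B$ is compact since $BA$ is, and it is positive since $AB$ is selfadjoint; hence $|AB|=\bigl((AB)^2\bigr)^{1/2}$ is compact (continuous functional calculus with $f(0)=0$ applied to a positive compact operator), so $AB$ is compact, its nonzero eigenvalues, with algebraic equal to geometric multiplicity by selfadjointness, are exactly the nonzero $\lambda_j$, and $s(AB)=s(D)$. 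With these two repairs your proof is correct.
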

An important special case of the previous result asserts that if $AX$ is selfadjoint and $AX\in \mathcal{B}_p(H)$ then
\begin{equation}\label{cota NUI}
w_p(AX)=\|AX\|_p\leq\|Re(XA)\|_p \leq w_p(XA),
\end{equation}
for $1\leq p <\infty$.

\begin{theorem}
Let $A,X\in \mathcal{B}(H)$ such that $AX$ is selfadjoint and $AX\in \mathcal{B}_p(H)$. If $XA\in \mathcal{B}_p(H)$ with $1<p<\infty$, then the following conditions are equivalent
\begin{enumerate}
\item $w_p(AX)=w_p(XA).$
\item $\|Re(XA)\|_p=\|AX\|_p.$
\item $XA$ is selfadjoint.
\end{enumerate} 
\end{theorem}
\begin{proof}
{\rm (1) $\Rightarrow$ (2)} The equality follows from \eqref{cota NUI} and the hypothesis.

{\rm (2) $\Leftrightarrow$ (3)} The proof follows a similar idea to that of Kittaneh in \cite{kittaneh1992} Lemma 2.

{\rm (3) $\Rightarrow$ (1)} We have the following inequality 
$$
w_p(AX)=\|AX\|_p=\|Re(AX)\|_p\geq \|XA\|_p=w_p(XA).
$$
Combining with  inequality \eqref{cota NUI}, we obtain the equality desired. 
\end{proof}

Using Proposition 1 in \cite{kittaneh_jfa_1997} we obtain another  lower bound for the numerical radius for the product of two positive operators. Then if  $A, X \in \mathcal{B}(H)$
$$
\|X^{1/2}AX^{1/2}\|=\|A^{1/2}X^{1/2}\|^2 \leq \|Re(AX)\|\leq w(AX)
$$ 
and
$$
\|X^{1/2}AX^{1/2}\|_{p/2}=\|A^{1/2}X^{1/2}\|_p ^2\leq \|Re(AX)\|_{p/2}\leq w_{p/2}(AX)
$$
if $A, X\in \mathcal{B}_p(H)$ for $p\geq 2$.

\bibliographystyle{amsplain}.

\end{document}